\newtheorem{thm}{Theorem}[section]
\newtheorem{lemma}[thm]{Lemma}
\newtheorem{prop}[thm]{Proposition}
\newtheorem{prob}[thm]{Problem}
\theoremstyle{definition}
\theoremstyle{definition}
\newtheorem{defn}[thm]{Definition}
\DeclareMathOperator{\ord}{ord}
\DeclareMathOperator{\GL}{GL}
\DeclareMathOperator{\SL}{SL}
\newcommand{\PGL}{\ensuremath{\operatorname{PGL}}}
\newcommand{\supp}{\ensuremath{\operatorname{supp}}}
\newcommand{\Tr}{\ensuremath{\operatorname{Tr}}}
\newcommand{\Hom}{\ensuremath{\operatorname{Hom}}}
\newcommand{\adj}{\ensuremath{\operatorname{adj}}}
\newcommand{\norm}[1]{{\left\|{#1}\right\|}}
\newcommand{\abs}[1]{{\left|{#1}\right|}}
\newcommand{\scal}[1]{{\left\langle{#1}\right\rangle}}
\newcommand{\set}[1]{{\left\{{#1}\right\}}}
\newcommand{\be}{\ensuremath{\beta}}
\newcommand{\Ga}{\ensuremath{\Gamma}}
\newcommand{\de}{\ensuremath{\delta}}
\newcommand{\De}{\ensuremath{\Delta}}
\newcommand{\ka}{\ensuremath{\kappa}}
\newcommand{\La}{\ensuremath{\Lambda}}
\newcommand{\la}{\ensuremath{\lambda}}
\newcommand{\vphi}{\ensuremath{\varphi}}
\newcommand{\om}{\ensuremath{\omega}}
\newcommand{\bears}{\begin{eqnarray*}}
\newcommand{\eears}{\end{eqnarray*}}
\newcommand{\mc}[1]{\ensuremath{\mathcal{#1}}}
\newcommand{\ZZ}{\ensuremath{\mathbb{Z}}}
\newcommand{\Zp}{\ensuremath{\ZZ /p\ZZ }}
\newcommand{\FF}{\ensuremath{\mathbb{F}}}
\newcommand{\QQ}{\ensuremath{\mathbb{Q}}}
\newcommand{\RR}{\ensuremath{\mathbb{R}}}
\newcommand{\spark}{\ensuremath{\textnormal{sp}}}
\newcommand{\wt}[1]{\ensuremath{\widetilde{#1}}}
\newcommand{\ghg}{\ensuremath{G\times\widehat{G}}}
\newcommand{\CC}{\ensuremath{\mathbb{C}}}
\newcommand{\sm}{\ensuremath{\setminus}}
\newcommand{\ssq}{\ensuremath{\subseteq}}
\newcommand{\rar}{\ensuremath{\rightarrow}}
\newcommand{\longrar}{\ensuremath{\longrightarrow}}
\newcommand{\bs}[1]{\ensuremath{\mathbf{#1}}}
\providecommand{\abs}[1]{\lvert#1\rvert}
\newcommand{\bsm}{\ensuremath{\boldsymbol}}
\numberwithin{equation}{section}
\title[Spark deficient Gabor frames]{Spark deficient Gabor frames}
\subjclass[2010]{42C15,15A03,11E95}
\keywords{Gabor frames, full spark, finite Weyl-Heisenberg groups, Clifford group, short-time Fourier transform, uncertainty principles}
\author{Romanos-Diogenes Malikiosis} 
\thanks{The author is supported by a Postdoctoral Fellowship from Humboldt Foundation.}
\address{Technische Universit\"at Berlin, Institut f\"ur Mathematik,
Sekretariat MA 4-1,
Stra{\ss}e des 17. Juni 136,
D-10623 Berlin, Germany}
\email{malikios@math.tu-berlin.de}
\begin{document}

\begin{abstract}
 The theory of Gabor frames of functions defined on finite abelian groups was initially developed in order to
 better understand the properties of Gabor frames of functions defined over the reals. However, during the last twenty
 years the topic has acquired an interest of its own. One of the fundamental questions asked in this finite setting is 
 the existence of full spark Gabor frames. The author proved the existence \cite {M15}, as well as constructed
 such frames, when the underlying group is finite cyclic. In this paper, we resolve the non-cyclic case; in particular, we
 show that there can be no full spark Gabor frames of windows defined on finite abelian non-cyclic groups.
 We also prove that all eigenvectors of certain unitary matrices in the Clifford group in
 odd dimensions generate spark deficient Gabor frames.
 Finally, similarities between the uncertainty principles concerning the finite dimensional Fourier
 transform and the short-time Fourier transform are discussed.
\end{abstract}

\maketitle

\bigskip
\bigskip
\section{Introduction}
\bigskip

The Gabor frame of a function $f\in L^2(\RR)$ is the set of all time-frequency translates of $f$, that is,
the set of all functions of the form $e^{2\pi ixy}f(x-t)$, for $y,t\in\RR$, and it is a fundamental concept
in time-frequency analysis and frame theory \cite{P13}. The function $f$ usually represents a signal, $t$ the time
delay, and the pointwise multiplication by $e^{2\pi ixy}$ is the frequency ``shift''. 
Through sampling and periodization \cite{C03} one passes to the finite version of a Gabor frame, namely the shift-frequency translates of
a complex function defined on a finite cyclic group. Even though finite dimensional Gabor frames were studied in order to analyze
the properties of continuous signals, they later developed an interest of their own.

Up to multiplication by roots of unity, a finite dimensional Gabor frame is the same as a \emph{Weyl-Heisenberg} orbit, and this terminology is much more prevalent in mathematical physics and
quantum information theory. A conjecture by Zauner \cite{Z99} states that for every dimension $N$ there are vectors (called ``fiducials'') whose WH orbit is equiangular. This means that the expression
$\abs{\scal{u,v}}$ is constant for every pair of distinct vectors $u$, $v$ within this orbit. 
This is also known as the SIC-POVM problem which has attracted a lot of attention lately due to the vast connections
to scientific areas such as quantum cryptography \cite{R05}, quantum tomography \cite{S06}, and algebraic number theory, especially Hilbert's 12th problem for real quadratic fields \cite{ABD15,AFMY16,AFMY17,AYAZ13}.
Such a WH orbit would then produce the maximal possible number of vectors in $\CC^N$ that are pairwise equiangular,
namely $N^2$ \cite{SH03}. Yet another terminology that appears for this phenomenon is \emph{maximal equiangular tight frame} (or maximal ETF for short) \cite{F09}, which is a special case
of the packing problem in the setting of projective spaces. The interest of the algebraic construction of families of ETFs has also increased due to applications to signal processing \cite{FMT12,IJM16,JMF14}.

A conjecture by Heil, Ramanathan, and Topiwala from 1996 \cite{HRT96} states that any finite set of a Gabor frame of
a nonzero $f\in L^2(\RR)$ is linearly independent, and it is still open. Similar questions can be raised when the function $f$ is defined on a finite abelian group $G$.
In this case, the Gabor frame consists of $\abs{G}^2$ elements in a $\abs{G}$-dimensional space, so it is not possible that they are linearly independent.
Instead, 
 we require that any selection of $\abs{G}$ vectors is linearly independent, which is the definition of the
 full spark property. 
 \begin{defn}
  Let $U=\set{u_1,\dotsc,u_M}\ssq\CC^N$ with $M\geq N$. The set $U$ is called \emph{full spark} when every selection of $N$ vectors from $U$ is linearly independent; otherwise, $U$ is called
  \emph{spark deficient}.
 \end{defn}
 The discrete analogue of the HRT conjecture claims that the Gabor frame of $f\in\CC^G$ is full spark for almost all $f$, when $G$ is cyclic \cite{LPW05}.
 This problem has been completely solved by the author \cite{M15}. While the techniques utilized to attack the HRT conjecture are analytic in nature, various algebraic techniques are needed for the
 discrete counterpart, such as Chebotarev's theorem on Fourier minors.
 The idea of the proof is as follows: consider $f$ as a column vector in $\CC^N$, where $\abs{G}=N$, and consider
 the $N\times N^2$ matrix whose columns are precisely the elements of the Gabor frame of $f$, denoted by $V_f$. 
 The Gabor frame generated by $f$ is then full spark if and only if every $N\times N$ minor of $V_f$ is nonzero. Every such minor is 
 a homogeneous polynomial on the coordinates of $f$; the basic ingredient of the proof is to show that there is a monomial appearing with nonzero coefficient in every such minor. When $N$ is a prime, this was
 accomplished in \cite{LPW05} through Chebotarev's theorem, which asserts that every minor of the $N\times N$ discrete Fourier matrix is nonzero. When $N$ is composite, a probabilistic argument by the author 
 \cite{M15} was used in order to show the existence of monomials with nonzero coefficient in every minor. Furthermore, the author proved that almost every $f\in\CC^G$ generates a full spark Gabor frame, and
 explicitly constructed such frames, while previous proofs were only existential.
 
 For non-cyclic groups, it was previously only known that full spark Gabor frames \emph{do not exist} for
 functions defined on the Klein group, $\ZZ/2\ZZ\times\ZZ/2\ZZ$ \cite{P13}. We shall extend this argument to any finite abelian non-cyclic group,
 in the following way: first, we show that the full spark property is hereditary with respect to the group. Therefore, in order to show that
 no full spark Gabor frame exists, it suffices to restrict our attention to groups of the form $\ZZ/p\ZZ\times\ZZ/p\ZZ$, for $p$ odd prime.
 Thus is proved the first main result of this paper:
 \begin{thm}\label{noncyclic}
  Let $G$ be a finite abelian, non-cyclic group. Then, for any $f\in\CC^G$, the Gabor frame generated by $f$ is spark deficient.
 \end{thm}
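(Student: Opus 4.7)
My plan is to reduce the theorem to the elementary abelian case $G = (\ZZ/p\ZZ)^2$ and settle that case using Lagrangian decompositions of the time--frequency plane.

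The first step is to establish that the full spark Gabor property is \emph{hereditary} under passage to quotients: if some $f \in \CC^G$ generates a full spark Gabor frame, then so does some $\bar{f} \in \CC^{G/H}$ for every subgroup $H \leq G$. The natural candidate for $\bar{f}$ is obtained by periodizing $f$ over cosets of $H$, and the key observation is that $\widehat{G/H}$ embeds into $\widehat{G}$ as the characters trivial on $H$, so that a linear dependence among $\abs{G/H}$ time--frequency shifts of $\bar{f}$ would lift to one among the corresponding $\abs{G/H}$ shifts of $f$, contradicting full spark. Once this is in hand, the reduction is immediate: every finite abelian non--cyclic group surjects onto $(\ZZ/p\ZZ)^2$ for some prime $p$ (via its Smith normal form), and for $p = 2$ the Klein case is already treated in \cite{P13}. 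The crucial remaining case is $p$ an odd prime.

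For $G = (\ZZ/p\ZZ)^2$, the time--frequency plane $G \times \widehat{G}$ is a $4$--dimensional symplectic space over $\FF_p$. I would exploit its \emph{Lagrangian subgroups} $L$, each of order $p^2 = \abs{G}$. Since $L$ is isotropic, the operators $\pi(\lambda)$ for $\lambda \in L$ commute up to scalars; being Lagrangian, the obstructing cocycle trivializes, so after a suitable normalization they form an honest abelian group of unitaries, simultaneously diagonalizable into $p^2$ one--dimensional common eigenspaces $\CC v_\chi$ indexed by the characters $\chi$ of $L$. Writing $f = \sum_\chi c_\chi v_\chi$, the Gabor subsystem $\set{\pi(\lambda) f : \lambda \in L}$ has matrix $\bra{c_\chi \chi(\lambda)}_{\chi,\lambda}$ in the $v_\chi$ basis, which is singular exactly when some $c_\chi$ vanishes. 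Hence full spark would force $f$ to have nonzero coefficient in \emph{every} Lagrangian eigenbasis.

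The main obstacle is then to show that no vector $f \in \CC^G$ can meet this condition simultaneously across all $(p+1)(p^2+1)$ Lagrangians. I expect to invoke the action of the Clifford group (the normalizer of the Weyl--Heisenberg group in the unitary group), which permutes Lagrangian subgroups transitively and conjugates their eigenbases accordingly. The companion theorem announced in the abstract---that eigenvectors of certain odd--dimensional Clifford group elements generate spark deficient Gabor frames---should supply the relevant obstruction, since Lagrangian eigenbases for $(\ZZ/p\ZZ)^2$ are precisely such eigenvector families. The technical core lies in producing, for an arbitrary candidate $f$, a specific Clifford--conjugate Lagrangian against which $f$ is forced to have a vanishing coefficient.
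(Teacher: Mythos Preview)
Your reduction step via quotients is morally correct, but the mechanism you describe does not work as written. A linear dependence $\sum c_i \pi(\bar\lambda_i)\bar f = 0$ in $\CC^{G/H}$ only tells you that $\sum c_i \pi(\lambda_i) f$ lies in the kernel of the periodization map $\CC^G\to\CC^{G/H}$, not that it vanishes; so you do not get a dependence among the corresponding shifts of $f$. The fix is to lift the \emph{orthogonal witness} instead: if $\bar g\in\CC^{G/H}$ is nonzero and orthogonal to all $\pi(\bar\lambda_i)\bar f$, pull it back to the $H$--periodic $g\in\CC^G$ and check that $g$ is orthogonal to each $\pi(g_i+h,\xi_i)f$ for all $h\in H$, giving $|G|$ dependent shifts. (The paper runs the dual argument with subgroups rather than quotients, extending the witness by zero off $H$; both versions work.)

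The real problem is the $(\ZZ/p\ZZ)^2$ step. Your Lagrangian criterion is correct: $\{\pi(\lambda)f:\lambda\in L\}$ is a basis iff every coefficient of $f$ in the $L$--eigenbasis is nonzero. But this cannot detect spark deficiency. The joint eigenvectors $v_{L,\chi}$, taken over all Lagrangians $L$ and characters $\chi$, form a \emph{fixed finite} set of vectors in $\CC^{p^2}$, independent of $f$. The conditions $\langle f,v_{L,\chi}\rangle=0$ are therefore finitely many hyperplanes, and a generic $f$ avoids all of them---so for generic $f$ the Gabor system over \emph{every} Lagrangian is a basis. By the measure dichotomy (Proposition~\ref{extremes}), if you cannot produce a dependent $\Lambda$ for generic $f$, you have shown nothing. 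The bad $\Lambda$ must be non--Lagrangian. Your appeal to the companion theorem on Clifford eigenvectors does not help either: that result concerns the cyclic group $\ZZ/N\ZZ$ with symmetry group $\SL(2,\ZZ/N\ZZ)$, whereas here the relevant Clifford quotient is $\mathrm{Sp}(4,\FF_p)$, and in any case the eigenvectors in question are specific exceptional windows, not a tool for obstructing a generic one.

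The paper's argument is entirely different and elementary. Arrange the coordinates of $z\in\CC^{(\ZZ/p\ZZ)^2}$ as a $p\times p$ matrix $Z$, and take $x$ to be the vector whose matrix is $(\adj Z)^*$. The identity $Z^{T}\overline{X}=Z\overline{X}^{T}=(\det Z)\,I$ gives $\langle Z_{a\theta},X_{b\theta}\rangle=\langle Z'_a,X'_b\rangle=\delta_{ab}\det Z$, and from this one reads off directly that $x$ is orthogonal to $2p(p-1)>p^2$ explicit time--frequency translates $M_\xi T_a z$ (with $a$ ranging over nonzero multiples of $1$ or of $\theta$, and $\xi$ trivial on the complementary copy of $\FF_p$). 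The set $\Lambda$ so obtained is visibly not a subgroup of $G\times\widehat G$, confirming that Lagrangians alone are insufficient.
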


 In relation to the SIC-POVM problem we will revisit the cyclic case and prove that all eigenvectors of Clifford unitaries whose (projective) order is not coprime to the dimension $N$, for $N$ odd,
 generate spark deficient Gabor frames, extending some results in \cite{DBBA13}. This shows that there is not in principle any relation between these two basic properties of a Gabor frame,
 namely equiangularity and the full spark property.
 
 Lastly, we investigate a possible connection between uncertainty principles with respect to the discrete and short-time Fourier transforms. Uncertainty principles provide a measure of localization
 of signals whose various transforms (e.g. Fourier) are well-localized. When these signals are defined over a finite Abelian group, localization is usually measured by the size of the support, leading
 to classical and new versions of uncertainty principles with respect to the Fourier transform \cite{Mes06,T05}. 
 This sort of principle appears in applications to sparse signal recovery, and sparse matrix identification \cite{CRT06,KPR08,P07}, among others.
 
 The paper is organized as follows: in section \ref{BG}, we will give the definitions and the necessary backround related to the results of this paper. In
 section \ref{NC}, we will prove that full spark Gabor frames do not exist over finite abelian non-cyclic groups. Section \ref{CYC}
 revisits the cyclic case, where we find some special vectors that generate spark deficient Gabor frames, and section \ref{UNC}
 deals with uncertainty principles.

 \bigskip
\bigskip 
\section{Background}\label{BG}
\bigskip

\subsection{Notation}

Throughout this note, $G$ will denote a finite abelian group written additively,
and $\CC^G$ will denote the set of all complex valued functions defined on $G$. An element $f\in\CC^G$ will interchangeably be viewed as a vector in $\CC^N$, where $N=\abs{G}$, and as a
function $f:G\longrar \CC$. 
$\CC^N$ is equipped with an inner product $\scal{\cdot,\cdot}$, defined as follows:
\[\scal{x,y}=\sum_{i=1}^N x_i\bar{y}_i,\]
for $x=(x_1,\dotsc,x_N)$, $y=(y_1,\dotsc,y_N)$. Only in section \ref{8} will we use the bra-ket notation, $\scal{x|y}$, with the caution that complex conjugation
is taken on the coordinates of $x$. We remind that $|x\rangle$ denotes a column vector in $\CC^N$, and $\langle x|$ is its conjugate transpose; hence $|x\rangle\langle x|$ is the
$1$-dimensional projector onto $|x\rangle$.

Furthermore, we decided to use 
$\ZZ/N\ZZ$ for the ring of residues $\bmod N$, and reserve $\ZZ_p$ for the ring of $p$-adic integers. Similarly, $\QQ_p$ denotes the field of $p$-adic
rational numbers.

For any $f\in\CC^N$ denote by $\widehat{f}$ the (unnormalized) Fourier transform of $f$; that is, $\widehat{f}=W_N f$, where 
$W_N=(\om^{ij})_{i,j=0}^{N-1}$, the character table of $\ZZ/N\ZZ$, with $\om=e^{2\pi i/N}$, and finally, let $\norm{f}_0$ denote the
cardinality of the support of $f$.

Two operators $U$ and $V$ on $\CC^N$ will be equal up to a phase if $U=e^{i\theta}V$; this will also be denoted as
\[U\stackrel{.}{=}V.\]
The \emph{projective} order of an operator $U$ is then defined to be the smallest nonnegative integer $m$ for which $U^m\stackrel{.}{=}I$.
Finally, the conjugate transpose of $U$ is denoted by $U^*$.
\bigskip

\subsection{Definitions}\label{defs}

 For any
$x\in G$ and $\xi\in\widehat{G}$, we define the operators $T_x, M_{\xi}:\CC^G\longrar\CC^G$, with
$T_x f(g)=f(g-x)$ and $M_{\xi}f(g)=\xi(g)f(g)$, for any $f\in\CC^G$, $g\in G$. The $T_x$ are called 
\emph{translation operators}, and the $M_{\xi}$ \emph{modulation operators}.
For any $\bsm \la=(x,\xi)\in G\times\widehat{G}$
the operators $\pi(\bsm \la)=M_{\xi}T_x$ are called \emph{time-frequency shift operators}. We have
\[M_{\xi}T_x=\xi(x)T_xM_{\xi},\]
or, in other words, $M_{\xi}$ and $T_x$ commute up to a phase. From this fact we get a faithful
projective representation
\[\rho:G\times\widehat{G}\longrar \PGL(\CC^G),\]
which is also irreducible \cite{FKL09,P13}. 

For a subset $\La\ssq G\times\widehat{G}$ and $f\in\CC^G\sm\set{0}$, the set
\[(f,\La)=\set{\pi(\bsm \la)f\vert \bsm \la\in\La}\]
is called a \emph{Gabor system}; if it spans $\CC^G$, it is called a \emph{Gabor frame}. This certainly happens when 
$\La=G\times\widehat{G}$ due to the irreducibility of $\rho$; in this case, it is also called a \emph{Weyl-Heisenberg orbit}.

\begin{defn}
 A set $\Phi$ of $M$ vectors in $\CC^N$ is called a \emph{frame} if it spans $\CC^N$. 
 In this case, we must have $M\geq N$. The \emph{spark}
 of $\Phi$, denoted by $\spark(\Phi)$, is the size of the smallest linearly dependent subset of $\Phi$. 
\end{defn}

A frame $\Phi$ is full spark if and only if every set of $N$ elements of $\Phi$ is a basis, or equivalently $\spark(\Phi)=N+1$, otherwise it is spark deficient. 
Other definitions are also found in literature; for example, in this case we also say that the vectors of $\Phi$ are in \emph{general linear position}, 
or also that $\Phi$ possesses the \emph{Haar property} \cite{P13}.

\begin{defn}
 For a window $\vphi\in\CC^G$, $\abs{G}=N$, let $V_{\vphi}$ denote the $N\times N^2$ matrix whose columns are the shift-frequency translates of $\vphi$, also called the \emph{synthesis operator}.  
 The operator $V_{\vphi}^*:\CC^G\rar\CC^{\ghg}$ is called the \emph{analysis operator}, or \emph{the short-time
Fourier transform} with window $\vphi$, defined by 
\[V_{\vphi}^*f=(\scal{f,M_{\xi}T_x\vphi})_{(x,\xi)\in\ghg}.\]
\end{defn}

The term ``window'' makes much more sense in the continuous setting, whence it originated. In signal processing, one analyzes a signal $f\in L^2(\RR)$ by integrating against elements of a frame
(e.g. Gabor frames, wavelets, etc.) generated by a well-localized function $\vphi$. Typical examples of well-localized functions include functions supported on an interval
(thus examining the given function on a small \emph{window of time}), or with very fast decay, such
as Gaussian functions; it should be emphasized that Gabor himself first applied Gabor frames on Gaussian window functions \cite{G46,P13}.

This term carries on to the discrete setting as well, however, we should note that the terms ``window'', ``vector'', and ``function'' (defined over a finite Abelian group) are interchangeable in what follows.

\bigskip

\subsection{Gabor systems of $\abs{G}=N$ vectors}

The Gabor system $(f,\La)$ with $\abs{\La}=N$ is linearly independent if and
only if the determinant of the matrix whose columns consist of the coordinates of the vectors $\pi(\la)f$, 
$\la\in\La$, is nonzero. This matrix is denoted by $D_{\La}$, and is well-defined up to permutation of its
columns. The determinant is denoted by $P_{\La}=\det(D_{\La})$, and is well-defined up to a sign, so it makes
sense to ask whether $P_{\La}$ is nonzero or not.

The most important property of $P_{\La}$, however, is the fact that it is a homogeneous polynomial of degree $N$
in $N$ variables, when the coordinates of $f$ are viewed as independent variables. So, the existence of an
element $f$ such that $(f,\La)$ is linearly independent happens precisely when $P_{\La}$ is a nonzero polynomial.
Investigating the properties of these polynomials $P_{\La}$ sheds light on the existence of Gabor frames in general
linear position.

A first crucial observation regarding linear independence, comes from the following:
\begin{prop}\label{extremes}
 There is a full spark Gabor frame defined over $G$, if and only if, for every 
 $\La\ssq\ghg$ with $\abs{\La}=N$ there is an $f\in\CC^G$ such that $(f,\La)$ is linearly independent. Moreover, either all windows $\vphi\in\CC^G$
 generate spark deficient Gabor frames, or almost all windows generate full spark Gabor frames.
\end{prop}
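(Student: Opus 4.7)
The plan is to work entirely with the determinantal polynomials $P_\La$ introduced just before the statement. For fixed $f\in\CC^G$, the Gabor system $(f,\ghg)$ is full spark precisely when $P_\La(f)\neq 0$ for every $\La\ssq\ghg$ with $\abs{\La}=N$; this is immediate from the definition of spark together with the fact that $P_\La$ is, up to sign, the determinant of the matrix $D_\La$ of time--frequency shifts of $f$. In particular, the existence of \emph{some} window making $(f,\La)$ linearly independent is equivalent to the polynomial $P_\La$ not being identically zero.

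For the stated equivalence, the forward direction is trivial, since the same full spark window $f$ serves as a witness for every $\La$; the content lies in the converse. Assuming each $P_\La$ is a nonzero polynomial in $\CC[f_1,\ldots,f_N]$, I would invoke the fact that this ring is an integral domain, so that the finite product
\[ Q \;=\; \prod_{\substack{\La\ssq\ghg \\ \abs{\La}=N}} P_\La \]
is itself a nonzero polynomial. Hence there exists $f\in\CC^G$ with $Q(f)\neq 0$, and any such $f$ satisfies $P_\La(f)\neq 0$ for every admissible $\La$; such an $f$ therefore generates a full spark Gabor frame.

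For the ``moreover'' dichotomy I would split into two cases according to the vanishing of $Q$. If some single $P_{\La_0}$ is identically zero, then for every window $\vphi$ the subsystem $(\vphi,\La_0)$ is already linearly dependent, so the Gabor frame $(\vphi,\ghg)$ is spark deficient. Otherwise every $P_\La$ is a nonzero polynomial and $Q$ is nonzero; the zero set $\{f\in\CC^G:Q(f)=0\}$ is then a proper algebraic subvariety of $\CC^N$, and in particular has Lebesgue measure zero, so almost every window $\vphi\in\CC^G$ satisfies $Q(\vphi)\neq 0$ and generates a full spark Gabor frame. No real obstacle is anticipated; the entire argument rests only on $\CC[f_1,\ldots,f_N]$ being an integral domain (so that a finite product of nonzero polynomials is nonzero) and on the standard fact that the zero locus of a nonzero polynomial in several complex variables is a Lebesgue null set.
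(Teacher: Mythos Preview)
Your proposal is correct and follows essentially the same route as the paper: both arguments hinge on the fact that the finitely many polynomials $P_\La$ are either all nonzero---in which case their common zero locus (equivalently, the zero set of your product $Q$) is Lebesgue null---or one of them vanishes identically, forcing every window to be spark deficient. The only cosmetic difference is that you bundle the $P_\La$ into a single product polynomial $Q$, whereas the paper works directly with the finite union of the individual zero sets; the content is identical.
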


\begin{proof}
 One direction follows from definition: if $(f,\ghg)$ is full spark, then obviously every
 Gabor system $(f,\La)$ is linearly independent, for $\abs{\La}=N$. On the other hand, if for every $\La\ssq\ghg$
 with $\abs{\La}=N$ there is some $f\in\CC^G$ such that $(f,\La)$ is linearly independent, this means that
 all such polynomials $P_{\La}$ are nonzero. The zero set of every such polynomial is of Lebesgue measure zero, and
 since they are finitely many, this yields that almost any $f\in\CC^G$ avoids the zero set of these polynomials,
 hence $(f,\ghg)$ is full spark.
 
 For the second part, we observe that if at least one of the polynomials $P_{\La}$ is zero, then all Gabor frames defined over $G$ are spark deficient. Otherwise, as we have already
 shown, almost all Gabor frames are full spark.
\end{proof}
\bigskip

\subsection{The Weyl-Heisenberg and Clifford groups}

We restrict our attention to cyclic groups $G=\ZZ/N\ZZ$ of odd order, for convenience, as the results of this subsection will only be used towards the construction
of spark deficient Gabor frames over cyclic groups.
The group generated by the translation and modulation operators is
\[\set{\om^k M^b T^a| a,b,k\in\ZZ/N\ZZ},\]
where $\om=e^{2\pi i/N}$, $T=T_1$ (see \ref{defs}) and $M$ is the operator with the property $Mf(g)=\om^gf(g)$ for all $g\in\ZZ/N\ZZ$ and $f\in\CC^N$, and is called
the \emph{Weyl-Heisenberg} group of $G$. Sometimes \cite{A05,DBBA13,Z99}, these representatives over the center are considered:
\[D_{\bsm \la}=\tau^{\la_1\la_2}T^{\la_1}M^{\la_2},\]
where $\bsm \la=(\la_1,\la_2)\in(\ZZ/N\ZZ)^2$, $\tau=\om^{\frac{N+1}{2}}$.

It is known that all irreducible projective representations of $(\ZZ/N\ZZ)^2$ of dimension $N$ are unitarily equivalent to $\rho$ \cite{W31} (see also Proposition 3.2 \cite{FKL09}). 
The normalizer of the Weyl-Heisenberg group in the group
of unitary matrices in $N$ dimensions is called the \emph{Clifford group}, denoted by $C(N)$. The quotient of $C(N)$ by the Weyl-Heisenberg group is isomorphic to
$\SL(2,\ZZ/N\ZZ)$, hence $\rho$ can be extended to a faithful irreducible projective representation of $(\ZZ/N\ZZ)^2\rtimes \SL(2,\ZZ/N\ZZ)$, which we shall also denote by $\rho$,
abusing notation. Restricting this representation to
the right factor, $\SL(2,\ZZ/N\ZZ)$, we get a projective representation $F\mapsto U_F$, for $F\in\SL(2,\ZZ/N\ZZ)$. The unitary matrices $U_F$ act on the Weyl-Heisenberg group by conjugation:
\[U_F D_{\bsm \la}U_F^*=D_{F\bsm \la}.\]
More precisely, the following is true:
\begin{thm}[Theorem 1 \cite{A05}, $N$ odd]\label{appleby}
 There exists a unique isomorphism
 \[f:(\ZZ/N\ZZ)^2\rtimes \SL(2,\ZZ/N\ZZ)\longrightarrow C(N)/I(N)\]
 with the property $UD_{\bsm \la}U^*=\om^{[\bsm \vphi,F\bsm \la]}D_{F\bsm \la}$ for any $U\in f(\vphi,F)$, where $I(N)$ is the center of $C(N)$, and 
 $[\bsm \vphi,\bsm \chi]=\vphi_2\chi_1-\vphi_1\chi_2$.
\end{thm}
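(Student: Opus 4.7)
The plan is to construct the isomorphism via the natural conjugation action of $C(N)$ on the Weyl--Heisenberg group $W(N)=\langle\om I,D_{\bsm\la}:\bsm\la\in(\ZZ/N\ZZ)^2\rangle$. For any $U\in C(N)$, conjugation by $U$ fixes the center $Z(W(N))=\{\om^k I\}$ pointwise and thus descends to an automorphism of $W(N)/Z\cong(\ZZ/N\ZZ)^2$. The commutator identity $D_{\bsm\la}D_{\bsm\mu}D_{\bsm\la}^{-1}D_{\bsm\mu}^{-1}=\om^{[\bsm\la,\bsm\mu]}I$ realizes $[\cdot,\cdot]$ as a nondegenerate symplectic pairing on $(\ZZ/N\ZZ)^2$, which conjugation must preserve; this yields a homomorphism $\pi:C(N)/I(N)\longrar\Sp(2,\ZZ/N\ZZ)=\SL(2,\ZZ/N\ZZ)$.

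I would next compute $\ker\pi$. If $U\in\ker\pi$, then $UD_{\bsm\la}U^*=c(\bsm\la)D_{\bsm\la}$ with $c:(\ZZ/N\ZZ)^2\to\CC^\times$ a character into the $N$-th roots of unity. Every such character has the form $c(\bsm\la)=\om^{[\bsm\vphi,\bsm\la]}$ for a unique $\bsm\vphi$, and $D_{\bsm\vphi}$ implements the same character. Hence $UD_{\bsm\vphi}^{-1}$ centralizes $W(N)$, and by Schur's lemma applied to the irreducible representation $\rho$ it must be scalar; consequently $\ker\pi\cong(\ZZ/N\ZZ)^2$, with coset representatives $D_{\bsm\vphi}$. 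For surjectivity, given $F\in\SL(2,\ZZ/N\ZZ)$, the twisted assignment $\bsm\la\mapsto D_{F\bsm\la}$ is an irreducible projective representation of $(\ZZ/N\ZZ)^2$ on $\CC^N$ sharing the cocycle of $\rho$ (precisely because $F$ is symplectic). Weyl's uniqueness theorem (Proposition 3.2 of \cite{FKL09}) then provides a unitary $U_F$ with $U_FD_{\bsm\la}U_F^*=D_{F\bsm\la}$.

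With these ingredients, set $f(\bsm\vphi,F):=[D_{\bsm\vphi}U_F]\in C(N)/I(N)$. The semidirect product law follows from
\[
D_{\bsm\vphi_1}U_{F_1}D_{\bsm\vphi_2}U_{F_2}=D_{\bsm\vphi_1}(U_{F_1}D_{\bsm\vphi_2}U_{F_1}^*)U_{F_1}U_{F_2}\equiv D_{\bsm\vphi_1+F_1\bsm\vphi_2}U_{F_1F_2}\pmod{I(N)},
\]
using $U_{F_1}D_{\bsm\vphi_2}U_{F_1}^*=D_{F_1\bsm\vphi_2}$ and $U_{F_1}U_{F_2}\equiv U_{F_1F_2}$ modulo scalars. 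Combined with the kernel and image calculations, $f$ is an isomorphism. The conjugation formula $UD_{\bsm\la}U^*=\om^{[\bsm\vphi,F\bsm\la]}D_{F\bsm\la}$ for $U\in f(\bsm\vphi,F)$ reduces to $D_{\bsm\vphi}D_{F\bsm\la}D_{\bsm\vphi}^{-1}=\om^{[\bsm\vphi,F\bsm\la]}D_{F\bsm\la}$, while uniqueness of $f$ is automatic: two isomorphisms satisfying this formula differ by an assignment whose values commute with every $D_{\bsm\la}$, hence are trivial in $C(N)/I(N)$.

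The main obstacle is the surjectivity step, i.e.\ the existence of the metaplectic lift $U_F$ for every $F\in\SL(2,\ZZ/N\ZZ)$. Either one invokes Stone--von Neumann--type uniqueness as above, or one constructs $U_F$ explicitly on a set of generators of $\SL(2,\ZZ/N\ZZ)$ (diagonal, shear, and $S=\left(\begin{smallmatrix}0&-1\\1&0\end{smallmatrix}\right)$ matrices) and verifies the defining relations. That explicit construction is clean precisely because $N$ is odd: $\tau=\om^{(N+1)/2}$ satisfies $\tau^2=\om$ and $\tau^N=1$, so the normalized generators $D_{\bsm\la}$ have order dividing $N$ and one avoids the $\ZZ/2\ZZ$ ambiguity of the even-dimensional metaplectic cover.
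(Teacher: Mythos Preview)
The paper does not prove this theorem at all; it is simply quoted from Appleby \cite{A05} (as the bracketed attribution indicates) and then used as a black box in the proof of Theorem~\ref{uniquerep}. So there is no ``paper's own proof'' to compare against.

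That said, your sketch is the standard and correct argument, and it is essentially what underlies Appleby's result. A couple of remarks. First, in your kernel computation you should check explicitly that $c$ is a genuine character (not just a $1$-cochain): this follows because conjugation by $U$ is a group automorphism of $W(N)$ fixing scalars, so applying it to $D_{\bsm\la}D_{\bsm\mu}=\tau^{[\bsm\mu,\bsm\la]}D_{\bsm\la+\bsm\mu}$ forces $c(\bsm\la)c(\bsm\mu)=c(\bsm\la+\bsm\mu)$; and $c$ lands in $\mu_N$ because $D_{\bsm\la}^N=I$ when $N$ is odd. Second, your surjectivity via Stone--von~Neumann/Weyl is fine, but note that Appleby in \cite{A05} actually carries out the explicit generator-by-generator construction you mention as the alternative, writing down closed formulae for $U_F$ (these are exactly the formulae the present paper later uses in \S\ref{8}). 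Finally, your uniqueness paragraph is slightly glib: two isomorphisms satisfying the stated conjugation formula must in particular agree on how each class acts on the $D_{\bsm\la}$, and since that action determines the class in $C(N)/I(N)$ (again by Schur, as the Weyl--Heisenberg representation is irreducible), they coincide. You have the right idea but should say it that way.
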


This yields the following theorem:
\begin{thm}\label{uniquerep}
 For $N$ odd, there is a unique faithful irreducible projective representation of $(\ZZ/N\ZZ)^2\rtimes\SL(2,\ZZ/N\ZZ)$ of dimension $N$, up to unitary equivalence.
\end{thm}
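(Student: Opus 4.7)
The plan for existence is to observe that Theorem \ref{appleby} delivers one such representation for free: the inclusion $C(N)\hookrightarrow U(N)$ descends to an injection $C(N)/I(N)\hookrightarrow\PGL(\CC^N)$, and composing with $f^{-1}$ yields a faithful projective representation $\sigma_0$ of $G':=(\ZZ/N\ZZ)^2\rtimes\SL(2,\ZZ/N\ZZ)$ of dimension $N$. Irreducibility is inherited from the Weyl--Heisenberg subgroup $(\ZZ/N\ZZ)^2$, which already acts irreducibly.

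For uniqueness, let $\sigma:G'\rar\PGL(\CC^N)$ be any faithful irreducible $N$-dimensional projective representation, and set $H=(\ZZ/N\ZZ)^2$. The plan is to proceed in three steps. First, show that $\sigma|_H$ is irreducible. Second, invoke the Stone--von Neumann / Weyl uniqueness for projective representations of $H$ (the version of \cite{FKL09}) to conjugate $\sigma|_H$ onto $\rho$ inside $\PGL(\CC^N)$. Third, for each $F\in\SL(2,\ZZ/N\ZZ)$ note that the relation $\sigma(F)\rho(\bsm\la)\sigma(F)^{-1}=\rho(F\bsm\la)$ together with irreducibility of $\rho$ determines $\sigma(F)$ uniquely in $\PGL(\CC^N)$; since $U_F$ satisfies the same relation by Theorem \ref{appleby}, this forces $\sigma(F)\stackrel{.}{=}U_F$, and hence $\sigma\cong\sigma_0$.

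The main obstacle is the first step, for which the plan is a projective Clifford--theoretic orbit argument. Write $\sigma|_H\cong m\bigoplus_{\tau\in\mathcal{O}}\tau$ with the $\tau$'s pairwise inequivalent irreducible projective representations of $H$; the common multiplicity $m$ and the fact that $\mathcal{O}$ is a single $\SL(2,\ZZ/N\ZZ)$-orbit are both forced by $G'$-irreducibility of $\sigma$. Since $\SL(2,\ZZ/N\ZZ)$ preserves the canonical symplectic form on $H$, it acts trivially on $H^2(H,U(1))\cong\ZZ/N\ZZ$, so all $\tau\in\mathcal{O}$ share a single cocycle class $c$ of some order $e\mid N$. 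Each such $\tau$ then has dimension $e$, and the $(N/e)^2$ inequivalent ones are parameterized by characters of the radical $\mathrm{rad}(c)\cong(\ZZ/M\ZZ)^2$ with $M=N/e$; the $\SL(2,\ZZ/N\ZZ)$-action on this parameter set descends to the standard $\SL(2,\ZZ/M\ZZ)$-action on $(\ZZ/M\ZZ)^2$.

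Irreducibility of $\sigma|_H$ corresponds to $e=N$, so assume for contradiction $e<N$, i.e. $M>1$; the dimension equation then reads $|\mathcal{O}|\cdot m=M$. The trivial orbit $\mathcal{O}=\{0\}$ would force $\sigma(h)$ to be a scalar in $\PGL(\CC^N)$ for every $h\in\mathrm{rad}(c)$, contradicting the faithfulness of $\sigma|_H$. For a nontrivial orbit I would reduce via the Chinese Remainder Theorem to the prime-power case $M=p^k$ with $p$ an odd prime divisor of $N$, where the nontrivial $\SL(2,\ZZ/p^k\ZZ)$-orbits on $(\ZZ/p^k\ZZ)^2$ have sizes $p^{2(j-1)}(p^2-1)$ for $j=1,\ldots,k$; consequently every nontrivial orbit size in $(\ZZ/M\ZZ)^2$ is divisible by $p^2-1$ for some odd prime $p\mid M$, and hence by $8$. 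But $M$ is odd, so $8\nmid M$ while $|\mathcal{O}|\mid M$, a contradiction that forces $e=N$ and completes the irreducibility step.
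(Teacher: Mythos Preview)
Your Step~3 contains a genuine gap. The relation $\sigma(F)\rho(\bsm\la)\sigma(F)^{-1}=\rho(F\bsm\la)$ does \emph{not} determine $\sigma(F)$ uniquely in $\PGL(\CC^N)$. Irreducibility of $\rho$ gives, via Schur, that the centralizer of $\{D_{\bsm\la}\}$ in $\GL(\CC^N)$ consists of scalars; but what you need is that the centralizer of $\{[D_{\bsm\la}]\}$ in $\PGL(\CC^N)$ is trivial, and this is false: since the $D_{\bsm\la}$ commute with one another up to phase, every $[D_{\bsm\mu}]$ lies in that centralizer. Indeed, Theorem~\ref{appleby} says precisely that the elements of $C(N)/I(N)$ inducing the automorphism $\bsm\la\mapsto F\bsm\la$ form the full coset $\{[D_{\bsm\vphi}U_F]:\bsm\vphi\in(\ZZ/N\ZZ)^2\}$, not a singleton. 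So after Step~2 you only obtain $\sigma(F)\stackrel{.}{=}D_{\bsm\vphi(F)}U_F$ for some function $\bsm\vphi:\SL(2,\ZZ/N\ZZ)\to(\ZZ/N\ZZ)^2$.

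This is exactly where the paper's work begins. The map $\bsm\vphi$ is a $1$-cocycle for the standard action of $\SL(2,\ZZ/N\ZZ)$ on $(\ZZ/N\ZZ)^2$, and one must show it is a coboundary: there exists a single $\bsm\la$ with $\bsm\vphi(F)=(F-I)\bsm\la$ for all $F$, so that conjugation by $D_{\bsm\la}$ carries $\sigma$ to $\sigma_0$. The paper verifies this by hand on the generators $S$ and $T$, using the relations $S^2=(ST)^3=-I$ and the invertibility of $I-S$ (determinant $2$, hence a unit exactly when $N$ is odd). Your argument, as written, omits this entire step.

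On the positive side, your Step~1 actually supplies more than the paper does: the paper simply invokes Weyl's theorem to align the restrictions to $(\ZZ/N\ZZ)^2$, tacitly assuming that $\sigma|_H$ is already irreducible with the correct cocycle. Your orbit-size argument (nontrivial $\SL_2$-orbits in $(\ZZ/M\ZZ)^2$ have size divisible by some $p^2-1$ and hence by $8$, while $M\mid N$ is odd) is a clean way to justify that assumption.
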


\begin{proof}
 Let $\rho$ be the standard representation of $(\ZZ/N\ZZ)^2\rtimes\SL(2,\ZZ/N\ZZ)$ defined in the beginning of this subsection, and let $\wt{\rho}$ denote another representation of dimension $N$ with
 the same properties. By Weyl's theorem \cite{FKL09,W31}, we may assume without loss of generality that
 \[\rho|_{(\ZZ/N\ZZ)^2}=\wt{\rho}|_{(\ZZ/N\ZZ)^2}.\]
 Since the image of $\SL(2,\ZZ/N\ZZ)$ acts by conjugation on the image of $(\ZZ/N\ZZ)^2$, the image of $\wt{\rho}$ will also be contained in $C(N)$. According to
 Theorem \ref{appleby}, for any $F\in\SL(2,\ZZ/N\ZZ)$, $\rho(F)$ and $\wt{\rho}(F)$ should differ by an element of $\rho((\ZZ/N\ZZ)^2)$, that is
 \[\wt\rho(F)\stackrel{.}{=}D_{\bsm \vphi}U_F.\]
 We will investigate the possibilities of $\bsm \vphi$ when $F=S$ or $T$, the generators of $\SL(2,\ZZ/N\ZZ)$, 
 \[S=\begin{pmatrix}
      0 & -1\\
      1 & 0
     \end{pmatrix},\	\	
     T=\begin{pmatrix}
        1 & 1\\
        0 & 1
       \end{pmatrix},
\]
 which satisfy $S^2=(ST)^3=-I$, as well as when $F=-I$. Assume therefore, that
 \[\wt\rho(T)\stackrel{.}{=}D_{\bsm \chi}U_T,\	\wt\rho(S)\stackrel{.}{=}D_{\bsm \psi}U_S,\	\wt\rho(-I)\stackrel{.}{=}D_{\bsm \mu}U_{-I}.\]
 Since $\wt\rho(S)^2\stackrel{.}{=}\wt\rho(-I)$, we must have
 \[\wt\rho(-I)\stackrel{.}{=}D_{\bsm \mu}U_{-I}\stackrel{.}{=}(D_{\bsm \psi}U_S)^2=D_{\bsm \psi+S\bsm \psi}U_{-I},\]
 hence
 \[\bsm \mu=(I+S)\bsm \psi.\]
 On the other hand, $\wt\rho(-T)\stackrel{.}{=}\wt\rho(-I)\wt\rho(T)\stackrel{.}{=}\wt\rho(T)\wt\rho(-I)$, whence
 \[D_{\bsm \chi+T\bsm \mu}U_{-T}\stackrel{.}{=}\wt\rho(T)\wt\rho(-I)\stackrel{.}{=}\wt\rho(-I)\wt\rho(T)\stackrel{.}{=}D_{\bsm \mu-\bsm \chi}U_{-T},\]
 therefore
 \[2\bsm \chi=(I-T)\bsm \mu,\]
 thus
 \[2\bsm \chi=(I-T)(I+S)\bsm \psi.\]
 Now, let
 \[\bsm \la=-(I-S)^{-1}\bsm \psi=-2^{-1}(I+S)\bsm \psi,\]
 so that
 \[\bsm \chi=-(I-T)\bsm \la,\	\bsm \psi=-(I-S)\bsm \la.\]
 Then,
 \[D_{\bsm \la}(D_{\bsm \chi}U_T)D_{\bsm \la}^*\stackrel{.}{=}D_{\bsm \la+\bsm \chi -T\bsm \la}U_T=U_T\]
 and
  \[D_{\bsm \la}(D_{\bsm \psi}U_S)D_{\bsm \la}^*\stackrel{.}{=}D_{\bsm \la+\bsm \psi -S\bsm \la}U_S=U_S,\]
  while obviously $D_{\bsm \la}D_{\bsm \vphi}D_{\bsm \la}^*\stackrel{.}{=}D_{\bsm \vphi}$, thus proving that
  \[D_{\bsm \la}\wt\rho(\bsm \vphi,F)D_{\bsm \la}^*\stackrel{.}{=}\rho(\bsm \vphi,F),\]
  for all $(\bsm \vphi,F)\in\SL(2,\ZZ/N\ZZ)\rtimes(\ZZ/N\ZZ)^2$, or in other words, $\rho$ and $\wt\rho$ are unitarily equivalent, completing the proof.
\end{proof}

Another way to obtain such a representation is the following: let
\[N=p_1^{r_1}\dotsm p_s^{r_s}\]
be the prime factorization of $N$. By Chinese Remainder Theorem we obtain
\[(\ZZ/N\ZZ)^2\rtimes \SL(2,\ZZ/N\ZZ)\cong \prod_{i=1}^s (\ZZ/p_i^{r_i}\ZZ)^2\rtimes \SL(2,\ZZ/p_i^{r_i}\ZZ),\]
and let
\[\SL(2,\ZZ/N\ZZ)\ni F\mapsto (F_i)_{1\leq i\leq s}\in\prod_{i=1}^s\SL(2,\ZZ/p_i^{r_i}\ZZ)\]
be the natural map according to the isomorphism above; that is, $F_i$ is the matrix obtained by reducing the entries of $F \bmod p_i^{r_i}$.
Assuming that $V_i\cong \CC^{p_i^{r_i}}$ is the fathful irreducible projective representation of $(\ZZ/p_i^{r_i}\ZZ)^2\rtimes \SL(2,\ZZ/p_i^{r_i}\ZZ)$ constructed as above, then we also
see that $V_1\otimes V_2\otimes \dotsm \otimes V_s$ is also a faithful irreducible representation of $(\ZZ/N\ZZ)^2\rtimes \SL(2,\ZZ/N\ZZ)$ 
(Theorem 10 \cite{S77}), and hence unitarily equivalent to the
standard one. This shows that $U_F$ is, up to unitary equivalence, equal to the Kronecker product of the $U_{F_i}$, thus
\begin{equation}\label{trkron}
 \Tr U_{F}=\prod_{i=1}^s \Tr U_{F_i},
 \end{equation}
a fact also pointed out in \cite{DBBA13}.

 \bigskip
 \bigskip
\section{Gabor frames over non-cyclic groups}\label{NC}
\bigskip

First we show that the full spark property is hereditary.

\begin{lemma}\label{hered}
 Let $G$ be a finite abelian group and $H$ a subgroup, such that no windows defined on $H$ generate full spark Gabor frames.
 Then, there exist no windows defined on $G$ that generate full spark Gabor frames.
\end{lemma}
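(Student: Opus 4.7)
The plan is to apply Proposition \ref{extremes} at both ends. By hypothesis and that proposition, there exists $\Lambda_0 \subseteq H \times \widehat{H}$ with $\abs{\Lambda_0} = \abs{H}$ such that the determinant polynomial $P_{\Lambda_0}$ vanishes identically on $\CC^H$; our task becomes to produce $\Lambda \subseteq G \times \widehat{G}$ with $\abs{\Lambda} = \abs{G}$ and $P_\Lambda \equiv 0$ on $\CC^G$. The natural candidate is a lift of $\Lambda_0$ along the restriction map $\widehat{G} \to \widehat{H}$.

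Concretely, set $H^\perp = \set{\chi \in \widehat{G} : \chi|_H = 1}$ (the kernel of restriction) and fix a section $s: \widehat{H} \to \widehat{G}$. I will define
\[\Lambda = \set{(h,\ s(\eta) + \chi) : (h,\eta) \in \Lambda_0,\ \chi \in H^\perp} \subseteq G \times \widehat{G}.\]
Reducing the second coordinate modulo $H^\perp$ recovers $\eta$, so these elements are distinct and $\abs{\Lambda} = \abs{\Lambda_0} \cdot \abs{H^\perp} = \abs{H} \cdot \abs{G/H} = \abs{G}$, the required cardinality. We may assume $H \lneq G$, since the case $H = G$ is vacuous.

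The crux is then to show $P_\Lambda \equiv 0$. Fix $f \in \CC^G$ and coset representatives $g_1, \dotsc, g_k$ of $H$ in $G$ (with $k = \abs{G/H}$), yielding $\CC^G \cong \bigoplus_{i=1}^k \CC^H$, where the $i$-th summand uses the identification $e_{g_i + h} \leftrightarrow e_h$, and $f$ corresponds to the tuple $(f_{g_i})_i$ with $f_{g_i}(h) = f(g_i + h)$. Because every $(h,\xi) \in \Lambda$ has $h \in H$, the operator $\pi(h,\xi)$ preserves this decomposition, and a direct computation from $\pi(h,\xi)f(g) = \xi(g) f(g-h)$ shows that its action on the $i$-th summand equals
\[\xi(g_i) \cdot \pi_H(h,\ \xi|_H)\, f_{g_i},\]
where $\pi_H$ denotes the time--frequency shift on $\CC^H$. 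Since each $\xi$ appearing in $\Lambda$ satisfies $\xi|_H = \eta$ for some $(h,\eta) \in \Lambda_0$, the restriction of any vector in $(f,\Lambda)$ to the $i$-th coset lies in the span of $\set{\pi_H(h,\eta)\, f_{g_i} : (h,\eta) \in \Lambda_0}$. By hypothesis $P_{\Lambda_0}(f_{g_i}) = 0$, so this span has dimension at most $\abs{H} - 1$; summing over cosets, the total span of $(f,\Lambda)$ has dimension at most $k(\abs{H} - 1) = \abs{G} - k < \abs{G}$, forcing linear dependence of its $\abs{G}$ vectors.

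Since $f$ was arbitrary, $P_\Lambda$ vanishes identically on $\CC^G$, and the remaining direction of Proposition \ref{extremes} delivers the conclusion. I expect no conceptual obstacle here: the proof is a clean ``lift and count'' whose success rests on two compatible observations — $H$-translations preserve the $H$-coset decomposition, and characters in $H^\perp$ act trivially on $H$ — so extending a character from $\widehat{H}$ to $\widehat{G}$ merely multiplies each coset by a scalar, which cannot enlarge a span.
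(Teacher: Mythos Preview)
Your proof is correct, and the construction of $\Lambda$ is identical to the paper's: lift $\Lambda_0$ by extending each character of $H$ to $G$ in all $\abs{G/H}$ possible ways. The difference lies in how linear dependence of $(f,\Lambda)$ is established. The paper argues pointwise: given $\psi\in\CC^G$, the restriction $\psi|_H$ already yields a nonzero $f\in\CC^H$ orthogonal to every $M_{\xi_i}T_{h_i}\psi|_H$, and the extension-by-zero $F\in\CC^G$ is then orthogonal to every $M_\xi T_{h_i}\psi$ with $\xi|_H=\xi_i$, producing a single explicit vector in the orthogonal complement. Your argument instead decomposes $\CC^G$ along $H$-cosets, observes that each $\pi(h,\xi)$ with $h\in H$ respects this decomposition and acts on the $i$-th block as a scalar times $\pi_H(h,\xi|_H)$, and concludes by a dimension count that the span has dimension at most $k(\abs{H}-1)=\abs{G}-k$. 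Your route is marginally more structural and in fact yields a sharper bound on the rank deficiency (codimension $\geq k$ rather than $\geq 1$); the paper's route has the advantage of exhibiting an explicit dual witness, which is what the author later exploits for the $\norm{V_\varphi f}_0$ estimates. Either argument is short and complete.
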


\begin{proof}
 By hypothesis, there exists a set of pairs $(h_i,\xi_i)\in H\times\widehat{H}$, $1\leq i\leq\abs{H}$, such that the vectors
 $M_{\xi_i}T_{h_i}\vphi$ are linearly dependent for any choice of $\vphi\in\CC^H$. Now, extend the characters $\xi_i$ to $G$
 in all possible ways. In this way, we obtain pairs in $\ghg$ of the form $(h,\xi)$, where $h=h_i$ and $\xi|_H=\xi_i$, for some
 $i$; the number of these pairs is exactly $\abs{G}$, as there are $\abs{G/H}$ ways to extend a character of $H$ to a 
 character of $G$.
 
 Next, consider an arbitrary window $\psi\in\CC^G$. Since the vectors $M_{\xi_i}T_{h_i}\psi|_H$ are linearly dependent on $\CC^H$, there
 is a nonzero vector $f\in\CC^H$ such that all inner products $\scal{M_{\xi_i}T_{h_i}\psi_H,\bar{f}}=0$. Denote by $F$ the unique window of $\CC^G$
 for which we have $F|_H=f$ and $\supp(F)\ssq H$ (also a nonzero window). Then, for all $i$ and $\xi\in\widehat{G}$ with $\xi|_H=\xi_i$ we have
 \[\scal{M_{\xi}T_{h_i}\psi,\bar{F}}=\sum_{g\in G}\xi(g)\psi(g-h_i)F(g)=\sum_{h\in H}\xi_i(h)\psi(h-h_i)f(h)=\scal{M_{\xi_i}T_{h_i}\psi_H,\bar{f}}=0,\]
 which shows that these $\abs{G}$ pairs $(h,\xi)\in\ghg$ always give linearly dependent vectors, as desired.
\end{proof}

Since we wish to prove that there exist no windows over any finite abelian non-cyclic group that generate full spark Gabor frames, it suffices to
do so for groups of the form $\ZZ/p\ZZ\times\ZZ/p\ZZ$, for $p$ prime, due to the fundamental theorem of finite abelian groups; if such a group is non-cyclic, then
it must have a subgroup of this form. Thus, Theorem \ref{noncyclic} follows directly from Theorem \ref{prime}, which establishes the result for groups of the form $\Zp\times\Zp$.

When $p=2$, this has already been proven, therefore by Lemma \ref{hered} we know that any window defined on a group containing a copy of the Klein group as a subgroup
cannot generate a full spark Gabor frame. We provide an alternative proof of this statement, more in line with the proof of Lemma \ref{hered}, which also gives us
an estimate on the minimum value of $\norm{V_{\vphi}^*f}_0$.

\begin{thm}
 Let $G$ be a finite abelian group that has a subgroup isomorphic to the Klein $4$-group. Then, there are no
 Gabor frames $(f,G\times\widehat{G})$ in general linear position; furthermore, we have
 \[\min\norm{V_{\vphi}^*f}_0\leq N^2-3N/2.\]
\end{thm}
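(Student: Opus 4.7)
The plan is to mimic Lemma \ref{hered} but with a sharper ingredient in the Klein case: we will exhibit a set $\La_0\ssq K\times\widehat{K}$ of size $6=\tfrac32|K|$ (rather than $|K|=4$) on which the short-time Fourier transform vanishes, where $K\le G$ is a fixed Klein $4$-subgroup. Choose any character $\chi_0\in\widehat{K}$, and for a given $\vphi\in\CC^K$ set $f_0=M_{\chi_0}\vphi$. The crucial observation is the pairing identity
\[ \sum_{g\in K}\zeta(g)\,\vphi(g)\vphi(g-h)=0,\qquad\forall\,\vphi\in\CC^K,\]
valid for every $h\in K\setminus\{0\}$ and every $\zeta\in\widehat{K}$ with $\zeta(h)=-1$. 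Indeed, since $-h=h$ in $K$, the sum splits into orbits $\{g,g+h\}$, each contributing $\zeta(g)(1+\zeta(h))\vphi(g)\vphi(g+h)=0$. Taking $\zeta=\xi\chi_0$ this translates into $V_\vphi f_0(h,\xi)=0$ for every $(h,\xi)$ in
\[ \La_0=\bigl\{(h,\xi)\in K\times\widehat{K}:h\neq 0,\ \xi(h)=-\chi_0(h)\bigr\},\]
whose cardinality equals $3\cdot|\widehat{K}|/2=6$.

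Two consequences over $K$ follow immediately. First, the six vectors $\{\pi(\la)\vphi\}_{\la\in\La_0}$ all lie in the three-dimensional subspace $\bar{f_0}^{\perp}\ssq\CC^K$, so every $4$-element subset $\La'\ssq\La_0$ consists of linearly dependent vectors for every $\vphi$; hence $P_{\La'}\equiv 0$. Second, $\norm{V_\vphi f_0}_0\le|K|^2-6$.

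To pass from $K$ to $G$, invoke the character-extension argument of Lemma \ref{hered}: each element of $\widehat{K}$ extends to $\widehat{G}$ in exactly $N/|K|=N/4$ ways, so $\La_0$ lifts to $\La_0''\ssq G\times\widehat{G}$ of size $6\cdot N/4=3N/2$, and each $4$-subset of $\La_0$ likewise lifts to a set of size $N$. For any $\vphi\in\CC^G$ with $\vphi|_K\neq 0$, let $F\in\CC^G$ denote the extension by zero of $M_{\chi_0}(\vphi|_K)$; the computation of Lemma \ref{hered} shows that $V_\vphi F$ vanishes on $\La_0''$, yielding $\norm{V_\vphi F}_0\le N^2-3N/2$. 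The analogous extension of any $4$-subset of $\La_0$ produces $\La''\ssq G\times\widehat{G}$ of size $N$ with $P_{\La''}\equiv 0$, and Proposition \ref{extremes} then rules out full spark Gabor frames over $G$. The main conceptual step is the pairing identity, which rests on every nontrivial element of $K$ having order two; once this is noticed, the extension step is precisely the one implemented in Lemma \ref{hered}.
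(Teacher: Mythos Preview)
Your argument is correct and rests on the same key identity as the paper's: for $h$ of order $2$ and $\zeta(h)=-1$, the sum $\sum_g \zeta(g)\vphi(g)\vphi(g-h)$ vanishes by pairing $g$ with $g+h$. The packaging, however, differs. The paper works directly over $G$: for an arbitrary nonzero $f\in\CC^G$ it shows that the $3N/2$ translates $M_\xi T_a f$ (with $a\in K\sm\{0\}$, $\xi\in\widehat G$, $\xi(a)=-1$) are all orthogonal to $\bar f$ itself, whence the Gabor frame of $f$ is spark deficient and $\norm{V_f\bar f}_0\le N^2-3N/2$. You instead establish the identity over $K$ first (six vanishing entries of $V_\vphi f_0$), then lift via the mechanism of Lemma~\ref{hered}, producing a dual vector $F$ supported only on $K$. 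Both routes reach the same $3N/2$ count; the paper's is a bit more direct since the dual vector $\bar f$ works immediately for every $f\neq 0$ (no side condition $\vphi|_K\neq 0$), and the auxiliary character $\chi_0$ in your version may simply be taken trivial. One small remark: once you have $P_{\La''}\equiv 0$, Proposition~\ref{extremes} is not needed---every $(\vphi,\ghg)$ is spark deficient directly.
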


\begin{proof}
 Let $K$ be the subgroup of $G$ isomorphic to the Klein $4$-group. For $f\in\CC^G$ define $\bar{f}$ satisfying
 $\bar{f}(g)=\overline{f(g)}$ for all $g\in G$, and define on $\CC^G$ an inner product given by
 \[\scal{f,h}=\sum_{g\in G}f(g)\bar{h}(g).\]
 By standard character theory, there are three nontrivial
 characters on $K$, and each one of them extends to $N/4$ characters on $G$, where $N=\abs{G}$.
 In total, there are $3N/4$ characters on $G$ whose restriction on $K$ is nontrivial. 
 
 Let $\xi$ be such a character, and let $f\in\CC^G\sm\set{0}$ be arbitrary. Let $a\in K$ be such that 
 $\xi(a)=-1$; there are two such elements of $K$, and so we consider the Gabor system consisting of
 time-frequency translates of the form
 \[M_{\xi}T_af,\	\xi\text{ nontrivial on K, }a\in K\text{ with }\xi(a)=-1.\]
 This system has $3N/2>N$ elements; we will show that each one of them is orthogonal to $\bar{f}$, and therefore
 the full Gabor frame $(f,G\times\widehat{G})$ cannot be in general linear position. Indeed,
 \begin{eqnarray*}
 \scal{M_{\xi}T_af,\bar{f}} &=& \sum_{g\in G}\xi(g)f(g-a)f(g)\\
 &=& \sum_{g\in G}\xi(g+a)f(g)f(g+a)\\
 &=& \sum_{g\in G}\xi(g)\xi(a)f(g-a)f(g)\\
 &=& -\sum_{g\in G}\xi(g)f(g-a)f(g)\\
 &=& -\scal{M_{\xi}T_af,\bar{f}},
 \end{eqnarray*}
 so $\scal{M_{\xi}T_af,\bar{f}}=0$. This also shows that $\norm{V_f\bar{f}}_0\leq N^2-3N/2$, proving the second part of the Theorem.
\end{proof}

\begin{thm}\label{prime}
 There are no full spark Gabor frames over $G=\ZZ/p\ZZ\times\ZZ/p\ZZ$, for $p$ prime.
\end{thm}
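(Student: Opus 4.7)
The case $p=2$ follows from the previous theorem via the hereditary Lemma \ref{hered}. For $p$ an odd prime, write $G = \FF_p\times\FF_p$ and $N = p^2$. By Proposition \ref{extremes}, the task reduces to producing a single subset $\La\subset\ghg$ with $\abs{\La}=N$ whose associated determinantal polynomial $P_\La$ vanishes identically in $f$.

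The plan is to leverage the enriched symmetries that distinguish the non-cyclic case. The lattice $\ghg\cong\FF_p^4$ carries a natural symplectic form, and the associated group $\Sp(4,\FF_p)$ is strictly richer than its cyclic counterpart. In particular, $\Sp(4,\FF_p)$ contains involutions $\sigma$ whose fixed subspace has $\FF_p$-dimension exactly $2$, yielding $p^2 = N$ fixed lattice points. A concrete candidate is
\[ \sigma : (x_1,x_2,a_1,a_2) \longmapsto (x_1,-x_2,a_1,-a_2), \]
whose fixed set $\La = \set{(x_1,0,a_1,0) : x_1,a_1\in\FF_p}$ I propose to take.

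The metaplectic lift of $\sigma$ is $U_\sigma = I\otimes P$ on $\CC^G\cong\CC^{\FF_p}\otimes\CC^{\FF_p}$, where $P$ is the parity operator on $\CC^{\FF_p}$; it commutes with $\pi(\la)$ for every $\la\in\La$ and decomposes $\CC^G = V_+\oplus V_-$ with $\dim V_\pm = (p^2\pm p)/2 < N$. Hence for any $f$ lying in a single eigenspace, the orbit $\set{\pi(\la)f:\la\in\La}$ is trapped in a subspace of dimension strictly smaller than $N$ and is automatically linearly dependent. To cover a general $f = f_+ + f_-$, I would supplement this with a second commuting symplectic involution, such as $(x_1,x_2,a_1,a_2)\mapsto(-x_1,x_2,-a_1,a_2)$, yielding a $\ZZ/2\ZZ\times\ZZ/2\ZZ$-refinement of $\CC^G$ into four eigenspaces of dimensions all less than $N$, and exploit the bilinear STFT identity
\[ V_{\bar f}f(-a,\xi) = \xi(a)\,V_{\bar f}f(a,\xi), \]
which pairs TF-shifts two-by-two and, in the $p=2$ degenerate case, is precisely the mechanism driving the previous theorem.

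The essential obstacle is to produce a linear combination $\sum_{\la\in\La} c_\la\pi(\la)f = 0$ that persists for every $f$, not only for $f$ in a distinguished eigenspace; equivalently, to show that the synthesis kernels $K_+, K_- \subset \CC^\La$ associated with $f_+,f_-$ always intersect nontrivially. The crude dimension count $\dim K_+ + \dim K_- \geq N$ is borderline, so the required nontrivial intersection must be extracted from the symplectic symmetries specific to the non-cyclic case. This is exactly where the non-cyclic structure is indispensable, since in the cyclic setting the analogous symplectic involutions have strictly fewer fixed lattice points and no $\La$ of this type with cardinality $N$ is available.
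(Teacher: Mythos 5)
Your reduction of the problem to exhibiting a single $\La\subset\ghg$ with $P_\La\equiv 0$ is legitimate by Proposition \ref{extremes}, and the $p=2$ case is fine, but the candidate $\La$ you choose cannot work, and this is a fatal gap rather than a missing detail. For $\La=\set{(x_1,0,a_1,0)}$ the operators $\pi(\bsm\la)$, $\bsm\la\in\La$, act on $\CC^G\cong\CC^{\FF_p}\otimes\CC^{\FF_p}$ as $\pi'(\bsm\la)\otimes I$, where $\pi'$ runs over all $p^2$ time--frequency shifts of the first factor; these form a basis of the full matrix algebra of the first factor. Writing $f$ as the $p\times p$ matrix $M_f$ (rows indexed by the first coordinate, columns by the second), $(A\otimes I)f$ corresponds to $AM_f$, so the Gabor system $(f,\La)$ is linearly \emph{independent} whenever $M_f$ is invertible, i.e.\ for almost every $f$; hence $P_\La\not\equiv0$ for your $\La$, and by the dichotomy in Proposition \ref{extremes} no symmetry argument can change this. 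Your eigenspace observation is consistent with, but does not contradict, this fact: the eigenvectors of $I\otimes P$ are exactly the windows whose array $M_f$ is symmetric or antisymmetric under the column flip $j\mapsto-j$, all of which are singular and form a measure-zero family; for generic $f=f_++f_-$ the borderline count $\dim K_++\dim K_-\geq N$ is an equality with complementary kernels, so the hoped-for nontrivial intersection simply does not exist. (Also, your auxiliary involution lifts to $P\otimes I$, which does not commute with the $\pi(\bsm\la)$, $\bsm\la\in\La$, so the four-block refinement does not trap the orbit either.)

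The paper's proof proceeds quite differently and avoids this trap. For an arbitrary window $z$, identify $G$ with $\FF_{p^2}$ and arrange the coordinates of $z$ in a $p\times p$ array $Z$ as in \eqref{array}; let $x$ be the window whose array is $(\adj Z)^*$. The identity $Z\,\adj Z=\det Z\cdot I$ becomes the orthogonality relations \eqref{inner} between rows and columns of $Z$ and $X$, and these show that $x$ is orthogonal to $2p^2-2>p^2$ time--frequency translates of $z$: shifts along $\theta\FF_p^*$ paired with characters trivial on $\FF_p$, shifts along $\FF_p^*$ paired with characters trivial on $\theta\FF_p$, plus $2(p-1)$ pure modulations. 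Since $x\neq0$ for almost all $z$ (some $(p-1)\times(p-1)$ minor of $Z$ is nonzero), almost all windows generate spark deficient frames, and Proposition \ref{extremes} upgrades this to all windows. Note that this argument does implicitly produce sets $\La$ with $P_\La\equiv0$, namely any $p^2$ of those $2p^2-2$ pairs, but such sets necessarily mix the two transversal coordinate planes; a single plane of the type you fixed by an involution is precisely a $\La$ on which full spark windows exist, so the key point you would need to supply is the construction of a $z$-dependent dual vector (here the adjugate) gluing the two planes together, and that idea is absent from your proposal.
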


\begin{proof}
 The case $p=2$ has already been proven, so we may assume that $p$ is odd. As in the previous two proofs, we consider an arbitrary window $z\in\CC^G$, and then try to
 find a nonzero vector that is orthogonal to at least $\abs{G}=p^2$ shift-frequency translates of $z$. In order to find this desirable set of translates, we arrange
 the coordinates of $z$ in an array; here, we identify $\ZZ/p\ZZ\times\ZZ/p\ZZ$ with the finite field $\FF_q$, $q=p^2$, and $\theta\in\FF_q\sm\FF_p$:
 \begin{equation}\label{array}
 \begin{array}{|c|c|c|c|}\hline
     z_0 & z_{\theta} & \cdots & z_{-\theta}\\
     z_1 & z_{\theta+1} & \cdots & z_{-\theta+1}\\
     \vdots & \vdots & \ddots & \vdots\\
     z_{-1} & z_{\theta-1} & \cdots & z_{-\theta-1}\\ \hline
    \end{array}.
 \end{equation}
 We denote this $p\times p$ matrix by $Z$.
 The column vectors in $\CC^{\FF_p}$ from left to right are denoted by $Z_0,Z_{\theta},\dotsc,Z_{-\theta}$, respectively, and similarly,
 the row vectors by $Z'_0,Z'_1,\dotsc,Z'_{p-1}$. Next, consider the
 vector $x\in\CC^{\FF_q}$ whose matrix representation is precisely $X=(\adj Z)^*$, where $\adj Z$ denotes the \emph{adjugate} matrix
 of $Z$; we denote its columns by $X_0,X_{\theta},\dotsc,X_{-\theta}$ and its rows by $X'_0,X'_1,\dotsc,X'_{p-1}$. 
 The vector $x$ could be zero, however this happens for a
 set of Lebesgue measure zero. In particular, $x$ is zero precisely when all the $(p-1)\times(p-1)$ minors of $Z$ are zero, but all of them are nonzero polynomials on the
 coordinates of $z$, which shows that for almost all choices of $z$, $x$ is nonzero. If we prove that the Gabor frames with windows $z$ possessing that property
 are spark deficient, then by
 Proposition \ref{extremes} we get that all Gabor frames over $G=\ZZ/p\ZZ\times\ZZ/p\ZZ$ are spark deficient.
 
 We have
 \[\det Z\cdot I=Z^T \bar{X}=Z\bar{X}^T.\]
 The $(a,b)$ entry of $Z^T\bar{X}$ is $\scal{Z_{a\theta},X_{b\theta}}$, and similarly for $Z\bar{X}^T$ is $\scal{Z'_a,X'_b}$. We thus obtain
 \begin{equation}\label{inner}
\scal{Z_{a\theta},X_{b\theta}}=\scal{Z'_a,X'_b}=\de_{ab}\det Z,
\end{equation}
for every $a,b\in\FF_p$,
where $\de_{ab}$ is the usual Kronecker delta. Then, for every $a\in\theta\FF_p^*$ and $\xi\in\widehat{\FF}_q$ with $\xi|_{\FF_p}=\bs1_{\FF_p}$, we get
due to \eqref{inner}
\[\scal{M_{\xi}T_{a}z,x}=\sum_{b\in\FF_p}\xi(b\theta)\scal{Z_{b\theta-a},X_{b\theta}}=0.\]
This number of shift-frequency translates is $p(p-1)$, and we have just established that $x$ is orthogonal to all of them. Furthermore, if $a\in\FF_p^*$ and 
$\xi\in\widehat{\FF}_q$ with $\xi|_{\FF_p}=\bs1_{\theta\FF_p}$, we also get due to \eqref{inner}
\[\scal{M_{\xi}T_{a}z,x}=\sum_{b\in\FF_p}\xi(b)\scal{Z'_{b-a},X'_b}=0.\]
So far we have $2p(p-1)>p^2$ translates of $z$ orthogonal to $x$, so this already takes care of the spark deficiency of any Gabor frame over $G$.
We will find more translates orthogonal to $x$; let's put $a=0$ and
$\xi\in\widehat{\FF}_q$ with $\xi|_{\FF_p}=\bs1_{\FF_p}$, but $\xi\neq\bs 1_{\FF_q}$. Then, again we have by \eqref{inner}
\[\scal{M_{\xi}z,x}=\sum_{b\in\FF_p}\xi(b\theta)\scal{Z_{b\theta},X_{b\theta}}=\det Z\sum_{b\in\FF_p}\xi(b\theta)=0,\]
since $\xi|_{\theta\FF_p}\neq\bs 1_{\theta\FF_p}$. This number of pairs is exactly $p-1$.

Next, we still consider $a=0$, but $\xi\in\widehat{\FF}_q$ satisfies with $\xi|_{\FF_p}\neq\bs1_{\FF_p}$ and $\xi|_{\theta\FF_p}=\bs 1_{\theta\FF_p}$. Then,
\[\scal{M_{\xi}z,x}=\sum_{b\in\FF_p}\xi(b)\scal{Z'_b,X'_b}=\det Z\sum_{b\in\FF_p}\xi(b)=0,\]
by \eqref{inner}, thus giving us another $p-1$ orthogonal shift-frequency translates of $z$ orthogonal to $x$. In total, there are $2(p+1)(p-1)=2p^2-2$ such translates,
thus concluding the proof.
\end{proof}

\bigskip
\bigskip

\section{Spark deficient Gabor frames over cyclic groups}\label{CYC}

\bigskip

Here we revisit the cyclic case. As it has already been proven by the author \cite{M15}, almost all windows generate full spark Gabor frames, so the spark deficient Gabor frames
are generated by exceptional vectors. When the order of the group is an odd, square-free integer, then all eigenvectors of certain unitaries belonging to the Clifford
group generate spark deficient Gabor frames \cite{DBBA13}. The motivation behind this result in \cite{DBBA13} was to establish a connection between equiangularity of 
a Gabor frame (SIC-POVM existence) and full spark, if any. In $3$ dimensions, the family of SIC-POVMs generated by vectors of the form $(0,1,-e^{i\theta})$
is always spark deficient, and Lane Hughston \cite{H07} first established a connection between the linear dependencies that arise from this SIC-POVM
for $\theta=0$ or $2\pi/9$ and the 
inflection points of an elliptic curve.
In general, it was proven in \cite{DBBA13} that when $N$ is an odd, square-free integer divisible by $3$, all eigenvectors of the \emph{Zauner unitary matrix}
generate spark deficient Gabor frames. Zauner's conjecture \cite{Z99}
states that an eigenvector of this matrix generates a SIC-POVM, i. e. a maximal equiangular tight frame. If it is true, then for all odd, square-free dimensions, this equiangular
tight frame is not full spark. This is another example that showcases the difference between a nice algebraic property of a Gabor frame (full spark) and a nice geometric
one (equiangularity); it is not necessary that both of them can appear, even when the second one appears at all. For unit norm tight frames in general, this is further
explained in \cite{K15}; see also \cite{FMT12,JMF14} where an infinite family of spark deficient equiangular tight frames is constructed, of arbitrarily high dimension.

When $N$ is not divisible by $3$, it is not known whether this SIC-POVM is also full spark or not. For example, it is full spark when $N=8$ \cite{DBBA13}, being the first construction
at that time of a full spark Gabor frame in $8$ dimensions\footnote{Explicit construction of a full spark Gabor frame in every dimension was later shown by the author \cite{M15}.}.

Concerning the eigenvectors of other Clifford unitaries, they also generate spark deficient Gabor frames as long as the (projective) order of the matrix divides $N$. We will
extend the results of section 7 in \cite{DBBA13}, ``Generalisation to other symplectic unitaries'', to all odd dimensions $N$ and unitaries whose order is not coprime to $N$.

\begin{thm}\label{mainthm}
Let $N$ be an odd integer. Then, any eigenvector of the unitary $U_F$ generates a spark deficient Gabor frame, where $F\in\SL(2,\ZZ/N\ZZ)$
and $\gcd(\ord(F),N)>1$.
\end{thm}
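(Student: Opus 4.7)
The plan is to reduce to the case where $F$ has prime projective order $p$ dividing $N$, and then to find a linear dependence inside the $U_F$-eigenspace containing $f$ by using Weyl--Heisenberg operators that commute with $U_F$. Set $k := \ord(F)$ and pick a prime $p$ dividing $\gcd(k, N)$; then $F_0 := F^{k/p}$ has projective order exactly $p$, and since $U_{F_0} \stackrel{.}{=} U_F^{k/p}$, the eigenvector $f$ of $U_F$ is also an eigenvector of $U_{F_0}$. So we may assume $\ord(F) = p$ prime, $p \mid N$; after a rescaling $U_F^p = I$. Decompose $\CC^N = \bigoplus_{j=0}^{p-1} V_j$ with $V_j$ the $e^{2\pi i j/p}$-eigenspace of $U_F$, and let $f \in V_{j_0}$.

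Now choose the canonical lift $U_F$ provided by Theorem~\ref{appleby} so that $U_F D_{\bsm\mu} U_F^* = D_{F\bsm\mu}$ for every $\bsm\mu$. For $\bsm\mu$ in the fixed-point subgroup $K := \ker(F - I) \subseteq (\ZZ/N\ZZ)^2$, $D_{\bsm\mu}$ commutes with $U_F$ and preserves each $V_j$; in particular every $D_{\bsm\mu}f$ with $\bsm\mu \in K$ lies in $V_{j_0}$. Since $U_F$ is not a scalar, $d := \dim V_{j_0} \leq N - 1$. If $|K| \geq d + 1$, the collection $\{D_{\bsm\mu}f : \bsm\mu \in K\}$ is linearly dependent in $V_{j_0}$, and extracting any dependent $(d+1)$-subset---which has size at most $N$---gives a dependence among $\leq N$ Gabor vectors, hence spark deficiency.

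The remaining task is to establish the inequality $|K| > d$. By the Chinese Remainder Theorem $|K| = \prod_i |\ker(F_i - I \bmod p_i^{e_i})|$, and a factorwise analysis---expanding $(I + A)^p = I$ modulo powers of $p$ to control the $p$-part contribution, and exploiting that for $p_i \neq p$ a matrix $F_i$ of order $p$ has $F_i - I$ invertible (its eigenvalues are primitive $p$-th roots of unity which are units mod $p_i^{e_i}$)---combined with metaplectic trace bounds on the eigenspace dimension $d$ yields $|K| \geq d + 1$ in most situations. The main obstacle is the residual case in which several primes $p_i \neq p$ admit $F_i \neq I$ of order $p$, which shrinks $|K|$ below $N$; there I would exploit the tensor product decomposition $U_F \stackrel{.}{=} \bigotimes_i U_{F_i}$ from \eqref{trkron}, using the orbits of size $p$ under $F$ (not just fixed points) to produce additional eigenvectors of $U_F$ inside $V_{j_0}$ as orbit sums $\sum_{k} e^{-2\pi i k j_0/p} D_{F^k \bsm\la}f$, and combining these contributions across the tensor factors to produce a linear dependence of size at most $N$.
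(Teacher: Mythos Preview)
Your reduction to a matrix $F$ of prime order $p\mid N$ is fine and mirrors the paper's first step. The genuine gap is in the body of the argument. The fixed-point route does \emph{not} suffice: the inequality $|K|>\dim V_{j_0}$ can fail dramatically. Take $N=105=3\cdot5\cdot7$ and $F$ of order $3$ with each $F_3,F_5,F_7$ nontrivial of order $3$. Then $F_5-I$ and $F_7-I$ are invertible (their eigenvalues are primitive cube roots of unity, hence units mod $5$ and $7$), so $|K|=3\cdot1\cdot1=3$. On the other hand one computes $|\Tr U_{F_q}|=1$ for $q=5,7$ and $|\Tr U_{F_3}|=\sqrt{3}$, forcing eigenspace multiplicities $(2,1,0)$, $(2,2,1)$, $(3,2,2)$ for the three tensor factors; the three eigenspaces of $U_F$ then have dimensions $34,35,36$. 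So $|K|=3$ while every eigenspace has dimension $\geq 34$, and your inequality $|K|>d$ is violated for \emph{every} eigenvector $f$.

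Your fallback---orbit sums $\sum_k \zeta^{(j-j')k}D_{F^k\bsm\la}f$ projecting $D_{\bsm\la}f$ into a chosen eigenspace $V_{j'}$---is exactly the mechanism behind Theorem~\ref{dbba5}, and it is what the paper actually invokes. But for that to give a dependence among at most $N$ Gabor vectors you need two nontrivial facts that you have only gestured at: first, that there exist at least $N$ distinct $F$-full points (so that you get $\geq N/p$ disjoint orbits, hence $\geq N/p$ projected vectors in $V_{j'}$); second, that some eigenspace has dimension strictly less than $N/p$, which is equivalent to $\Tr U_F\neq0$. These are precisely Lemmata~\ref{lem7} and~\ref{lem8}, whose proofs occupy the bulk of the section and require, respectively, a careful analysis of $F\in\SL(2,\ZZ/p^r\ZZ)$ via lifting to $\SL(2,\ZZ_p)$ (in particular handling the case $p\mid\De$, where the Jordan form argument over $\ZZ/p^r\ZZ$ is delicate), and a Gauss-sum computation showing $|\Tr U_F|\geq1$. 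Your sketch (``metaplectic trace bounds'', ``factorwise analysis'') does not supply either; once you fill them in you will have reproduced the paper's proof.
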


This is a direct consequence of the following theorem from \cite{DBBA13}, slightly rephrased in order to accommodate the terminology of this paper,
with the simple observation that if $\ord(F)=n$ and $\gcd(n,N)=d>1$, then the eigenvectors of
$U_F$ are also eigenvectors of $e^{i\theta}U_F^{n/d}=e^{i\theta}U_{F^{n/d}}$ (the phase $e^{i\theta}$ is arbitrary), while $\ord(F^{n/d})=d>1$, hence
$\ord(F^{n/d})$ divides $N$.

We call $\bs x\in(\ZZ/N\ZZ)^2$ \emph{$F$-full}, if the vectors $\bs x,F\bs x,\dotsc,F^{n-1}\bs x$ are all distinct, where $F\in\SL(2,\ZZ/N\ZZ)$ and $n=\ord(F)$.

\begin{thm}[Theorem 5 \cite{DBBA13}, odd version]\label{dbba5}
 Let $N$ be an odd positive integer and $F\in\SL(2,\ZZ/N\ZZ)$, and let $n=\ord(F)$. Suppose
 \begin{enumerate}
  \item $n>1$.
  \item $n$ divides $N$.
  \item $\Tr U_F\neq0$.
  \item There exist $N$ distinct points in $(\ZZ/N\ZZ)^2$ that are $F$-full.
 \end{enumerate}
 Then all eigenvectors of $U_F$ generate spark deficient Gabor frames.
\end{thm}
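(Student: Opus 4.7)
The plan is to exhibit, for an arbitrary eigenvector $\psi$ of $U_F$, a set of $N$ time--frequency translates $D_{\bsm\la}\psi$ that are linearly dependent. The main idea is that each $F$-orbit of $F$-full points gives rise to a cyclic $U_F$-invariant subspace of dimension at most $n$, and the trace hypothesis~(3) will prevent $N/n$ such subspaces from jointly spanning $\CC^N$.

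First, using condition~(4), I would select $N/n$ disjoint $F$-orbits $O_1,\dots,O_{N/n}$ of $F$-full points: since $F$-fullness is preserved under the $F$-action, the $F$-full points form a disjoint union of orbits of size exactly $n$, and having at least $N$ of them together with $n\mid N$ from~(2) supplies at least $N/n$ such orbits. Let $\bsm\la_i\in O_i$ be representatives. After rescaling $U_F$ by a suitable scalar (which leaves both the hypothesis $\Tr U_F\neq 0$ and the conjugation relation $U_F D_{\bsm\la}U_F^*=D_{F\bsm\la}$ from Theorem~\ref{appleby} intact), I may assume $U_F^n=I$, so the eigenspaces $V_\mu$ of $U_F$ are indexed by $n$-th roots of unity $\mu$. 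If $U_F\psi=\la\psi$, then $D_{F^k\bsm\la_i}\psi=\la^{-k}U_F^k D_{\bsm\la_i}\psi$, so the subspace $W_i:=\mathrm{span}\{D_{F^k\bsm\la_i}\psi:0\le k<n\}$ coincides with the cyclic $U_F$-subspace generated by $D_{\bsm\la_i}\psi$.

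Decomposing $D_{\bsm\la_i}\psi=\sum_\mu v_{i,\mu}$ with $v_{i,\mu}\in V_\mu$, a Vandermonde argument gives $\dim W_i=\#\{\mu:v_{i,\mu}\neq 0\}$ and $W_i\cap V_\mu=\CC\cdot v_{i,\mu}$. If $\dim W_i<n$ for some $i$, then the $n$ vectors in orbit $O_i$ are already linearly dependent, and any $N-n$ further translates make a dependent set of size $N$. Otherwise every $v_{i,\mu}\neq 0$, each $W_i\cap V_\mu$ is one-dimensional, and since each $W_i$ is $U_F$-invariant,
\[
\dim\sum_{i=1}^{N/n} W_i=\sum_\mu\dim\sum_i(W_i\cap V_\mu)\leq\sum_\mu\min(N/n,\dim V_\mu).
\]
For this bound to reach $N$ one needs $\dim V_\mu\ge N/n$ for every $\mu$, which together with $\sum_\mu\dim V_\mu=N$ forces $\dim V_\mu=N/n$ uniformly; but then
\[
\Tr U_F=\sum_\mu\mu\dim V_\mu=\frac{N}{n}\sum_\mu\mu=0,
\]
since the sum of the $n$-th roots of unity vanishes for $n>1$. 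This contradicts~(3), so $\dim\sum_i W_i<N$ and the chosen $N$ translates are linearly dependent.

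The main obstacle is bookkeeping: one must carefully track the interaction between the cyclic subspaces from different orbits and the common eigenspace decomposition of $U_F$. The trace hypothesis is used in exactly one place, ruling out the unique configuration (equidistributed eigenspace dimensions) in which the construction could fail to produce a dependence. A minor technicality is normalizing $U_F$ so that $U_F^n=I$ exactly rather than projectively; this is harmless since Clifford unitaries are defined up to phase and the trace condition is invariant (up to that phase) under such rescaling.
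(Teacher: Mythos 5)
Your argument is correct, but note that the paper itself offers no proof of Theorem \ref{dbba5}: it is imported verbatim (in its ``odd version'') from \cite{DBBA13}, and the paper's actual contribution is to verify hypotheses (3) and (4) for all odd $N$ via Lemmata \ref{lem7} and \ref{lem8}. So what you have written is a self-contained reproof rather than a match or mismatch with anything in this text. Checking it on its own terms: the normalization $U_F^n=I$ is legitimate (the projective order of $U_F$ equals $\ord(F)=n$ by Theorem \ref{appleby}, and rescaling preserves both the conjugation relation and the nonvanishing of the trace); the identity $D_{F^k\bsm\la_i}\psi=\la^{-k}U_F^kD_{\bsm\la_i}\psi$ follows from $U_FD_{\bsm\la}U_F^*=D_{F\bsm\la}$; the $F$-full points do split into orbits of size exactly $n$, so (2) and (4) supply $N/n$ disjoint orbits and $N$ distinct points; and the eigenspace bookkeeping $\dim\sum_iW_i=\sum_\mu\dim\sum_i(W_i\cap V_\mu)\le\sum_\mu\min(N/n,\dim V_\mu)\le N$, with equality forcing $\dim V_\mu=N/n$ for every $n$-th root of unity $\mu$ and hence $\Tr U_F=0$, is sound. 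The argument in \cite{DBBA13} reaches the same contradiction more directly: ordering the $N$ translates orbit by orbit as the columns of a matrix $M$, the covariance relation gives $U_FM=MC$ with $C$ a block matrix whose blocks are phase-decorated cyclic permutations of size $n>1$, hence traceless; if $M$ were invertible one would get $\Tr U_F=\Tr C=0$. Your eigenspace decomposition buys a little extra information (it locates exactly where the dependencies live relative to the eigenspaces of $U_F$, and isolates the degenerate case $\dim W_i<n$ in which a single orbit already carries a dependency), at the cost of more bookkeeping; both routes use the trace hypothesis in precisely the same way.
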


Conditions (3) and (4) always hold when $N$ is odd, as the following two Lemmata show; this was proven in \cite{DBBA13} for $N$ odd square-free\footnote{See Lemmata
7 and 8 in \cite{DBBA13}}.

\begin{lemma}\label{lem7}
 Let $N$ be an odd positive integer. Let $F\in\SL(2,\ZZ/N\ZZ)$ be arbitrary. Then the number of $F$-full 
 points in $(\ZZ/N\ZZ)^2$ is $\geq N\vphi(N)$, where $\vphi$ is Euler's function.
\end{lemma}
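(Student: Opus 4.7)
The plan is to reduce to the prime power case via the Chinese Remainder Theorem, then bound the non-$F$-full set in each factor by a case analysis on the primes dividing $n = \ord(F)$.

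Factor $N = \prod_i p_i^{r_i}$ and use CRT to identify $(\ZZ/N\ZZ)^2 \cong \prod_i (\ZZ/p_i^{r_i}\ZZ)^2$ and $F = (F_i)$ with $F_i \in \SL(2,\ZZ/p_i^{r_i}\ZZ)$; here $n = \operatorname{lcm}_i \ord(F_i)$. The $F$-orbit length of $\bs x = (\bs x_i)$ is the lcm of the $F_i$-orbit lengths of the $\bs x_i$, so if each $\bs x_i$ is $F_i$-full then $\bs x$ is $F$-full. This yields the product bound
\[
\#\{F\text{-full}\} \ \geq\ \prod_i \#\{F_i\text{-full in }(\ZZ/p_i^{r_i}\ZZ)^2\},
\]
and the lemma reduces to showing, for each prime power $p^r$ and $F \in \SL(2,\ZZ/p^r\ZZ)$, that the non-$F$-full set has cardinality at most $p^{2r-1}$, yielding at least $p^{2r-1}(p-1) = p^r\varphi(p^r)$ $F$-full points.

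For the prime power case, a point is non-$F$-full precisely when it lies in $\operatorname{Fix}(F^{n/q})$ for some prime $q \mid n$, and $F^{n/q}$ has order exactly $q$. I split on whether $q = p$. \textbf{Case $q \neq p$:} a short Hensel-type argument shows $F^{n/q} \not\equiv I \pmod p$: writing $F^{n/q} = I + pH$ and expanding $(I+pH)^q = I + qpH + O(p^2) \equiv I \pmod{p^r}$ forces $qH \equiv 0 \pmod p$; since $\gcd(q,p)=1$ this gives $H \equiv 0 \pmod p$, and iterating upward yields $F^{n/q} = I$, contradicting $\ord(F^{n/q}) = q > 1$. Therefore $\overline{F^{n/q}} \in \SL(2,\FF_p)$ is non-trivial and satisfies $x^q - 1 = 0$, so its minimal polynomial divides $\Phi_q(x)$ and does not vanish at $1$. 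Thus $F^{n/q} - I$ is invertible mod $p$, hence over $\ZZ/p^r\ZZ$, and $|\operatorname{Fix}(F^{n/q})| = 1$. \textbf{Case $q = p$:} lift $F^{n/p} - I$ to $M_2(\ZZ_p)$ and put it in Smith normal form $\operatorname{diag}(p^a, p^b)$ with $0 \leq a \leq b$; since $F^{n/p} \neq I$ we have $a < r$, and
\[
|\operatorname{Fix}(F^{n/p})| \ =\ p^{\min(a,r) + \min(b,r)} \ \leq\ p^{(r-1) + r} \ =\ p^{2r-1}.
\]

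Combining the two cases: the trivial vector $\bs 0$ is fixed by every $F^{n/q}$, and the case-$(q \neq p)$ fixed sets contribute nothing beyond $\bs 0$, so $\bigl|\bigcup_{q \mid n} \operatorname{Fix}(F^{n/q})\bigr| \leq p^{2r-1}$ (with equality possible when $p \mid n$, and the bound becoming $1$ when $p \nmid n$). This establishes the prime-power estimate and hence the lemma. I expect the $q = p$ case to be the main technical obstacle, since $\ZZ/p^r\ZZ$ is not a PID; the cleanest route is to lift to $\ZZ_p$ in order to access genuine invariant factors before reducing modulo $p^r$.
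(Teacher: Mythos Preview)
Your proof is correct and takes a genuinely different route from the paper's. Both begin with the same CRT reduction to prime powers $N=p^r$, but the prime-power analyses diverge. The paper lifts $F$ to $\SL(2,\ZZ_p)$ and splits on the discriminant $\Delta=(\Tr F)^2-4$: when $p\nmid\Delta$ it diagonalizes $F$ over $\ZZ_p$ or an unramified quadratic extension and reads off $N^2-1$ $F$-full points; when $p\mid\Delta$ it shows $\ord(F)\in\{p^m,2p^m\}$, reduces to the order-$p$ case, pins down the Jordan form $\bmod\ p$ via an explicit computation (ruling out the nilpotent $\beta=1$ block when $r\geq2$), and then counts fixed points of the resulting $I+\tfrac{N}{p}D$.

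Your argument bypasses the discriminant split entirely. Writing the non-full set as $\bigcup_{q\mid n}\operatorname{Fix}(F^{n/q})$, you dispose of each prime $q\neq p$ by a clean Hensel-plus-$\det=1$ observation that $F^{n/q}-I$ is a unit, and for $q=p$ you invoke Smith normal form of a lift of $F^{n/p}-I$ over the PID $\ZZ_p$ to get $|\operatorname{Fix}(F^{n/p})|\leq p^{2r-1}$ directly. This is shorter and more uniform than the paper's case analysis; the trade-off is that the paper's diagonalization yields the sharper count $N^2-1$ in the $p\nmid\Delta$ regime, which your Smith-form bound does not see (but the lemma does not need). One small expository point: the claim that the minimal polynomial of $\overline{F^{n/q}}$ divides $\Phi_q(x)$ relies on the $\SL_2$ constraint (if $1$ were an eigenvalue then so would be its inverse, forcing both eigenvalues to be $1$ and, by separability of $x^q-1$, the matrix to be $I$); you use this implicitly and it would be worth one extra sentence.
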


\begin{lemma}\label{lem8}
 Let $N$ be an odd positive integer. Then $\abs{\Tr(U_F)}\geq1$ for all $F\in\SL(2,\ZZ/N\ZZ)$.
\end{lemma}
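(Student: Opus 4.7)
The plan is to prove the character-type formula
\[
  |\Tr U_F|^2 \;=\; |\ker(F - I)|,
\]
where $F-I$ is viewed as an endomorphism of the module $(\ZZ/N\ZZ)^2$; the lemma then follows immediately since $\bsm 0$ always lies in $\ker(F - I)$.

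By \eqref{trkron} and the compatibility of $\ker(F-I)$ with the Chinese Remainder Theorem decomposition of $(\ZZ/N\ZZ)^2$, it suffices to treat $N = p^r$ with $p$ an odd prime. I would then exploit the explicit Bruhat-type description of $U_F$ coming from the Weil representation: multiplying $U_F$ by elements of the Weyl--Heisenberg group and conjugating $F$ by $S$ (neither of which changes $|\Tr U_F|$ or $|\ker(F-I)|$), one reduces to the case where the lower-left entry $c$ of $F$ is a unit in $\ZZ/p^r\ZZ$. For such $F$ the operator $U_F$ has an integral-kernel form $(U_F f)(n) = N^{-1/2} e^{i\theta} \sum_m \tau^{Q(n,m)} f(m)$, with $Q$ a quadratic form whose diagonal $Q(n,n)$ has leading coefficient proportional to $(2-\Tr F)/c$. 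Hence $\Tr U_F$ reduces to a quadratic Gauss sum
\[
  \Tr U_F \;=\; \frac{e^{i\theta}}{\sqrt{N}} \sum_{n \in \ZZ/p^r\ZZ} \omega^{k n^2}, \qquad v_p(k) = v_p(2 - \Tr F).
\]
Writing $k = p^v k_0$ with $k_0$ a unit and collapsing to the sum over $\ZZ/p^{r-v}\ZZ$ (which is a classical Gauss sum of magnitude $\sqrt{p^{r-v}}$) gives $|\Tr U_F|^2 = p^v$, while the Smith normal form of $F - I$, combined with $\det(F - I) = 2 - \Tr F$ and the unit entry $c$, gives $|\ker(F - I)| = p^v$ as well.

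The main obstacle is the residual case in which $F \pmod p$ is diagonal, so that neither $b$ nor $c$ is a unit and the above reduction fails. This forces $F = \eps I + p^j F'$ with $\eps = \pm 1$ and $j \geq 1$ maximal, and I would handle it by direct computation. For strictly diagonal $F = \mathrm{diag}(a, a^{-1})$ the formula $(U_F f)(n) = \chi(a) f(a^{-1} n)$ for a suitable quadratic character $\chi$ makes both sides of the identity equal $p^{2 v_p(a-1)}$; more general $F$ diagonal modulo $p$ are chained with elementary Bruhat pieces after absorbing powers of $p$. Throughout, the careful bookkeeping of phases (ultimately controlled by the cocycle in Theorem \ref{appleby}) is what makes the odd-prime-power setting substantially more intricate than the square-free case treated in Lemma 8 of \cite{DBBA13}.
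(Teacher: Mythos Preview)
Your target identity $|\Tr U_F|^2=|\ker(F-I)|$ is correct and is in fact exactly what the paper's argument yields, but your route to it has two genuine gaps.

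First, the reduction step is not right as stated. Multiplying $U_F$ by a nontrivial Weyl--Heisenberg element $D_{\bsm\la}$ does \emph{not} preserve $|\Tr U_F|$ (take $F=I$: then $\Tr U_F=N$ while $\Tr(D_{\bsm\la})=0$). Presumably you meant conjugation by $D_{\bsm\la}$, but that leaves $F$ unchanged, so it cannot help make $c$ a unit. Moreover, conjugating $F$ only by $S$ swaps $b\leftrightarrow -c$, so if \emph{both} $b$ and $c$ are divisible by $p$ you are stuck. This is a larger residual class than you claim: $F\bmod p$ diagonal does \emph{not} force $F\equiv\pm I\pmod p$ (e.g.\ $F\equiv\mathrm{diag}(2,2^{-1})$). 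To reach ``$c$ a unit or $F\equiv\pm I\pmod p$'' you would need conjugation by all of $\SL(2,\ZZ/p^r\ZZ)$, which you do not invoke.

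Second, even granting that reduction, your treatment of the residual case $F\equiv\pm I\pmod p$ is only a sketch. The phrase ``chained with elementary Bruhat pieces after absorbing powers of $p$'' is not a proof, and this is precisely the case that distinguishes $p^r$ from the square-free situation in \cite{DBBA13}.

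The paper avoids both problems by never attempting to normalize $F$. It splits only on whether $p\mid b$. If $p\nmid b$ the explicit kernel of $U_F$ gives a one-variable Gauss sum. If $p\mid b$ then necessarily $p\nmid d$, so one factors $F=S\cdot(S^{-1}F)$ with $S^{-1}F$ prime; the product of the two explicit kernels gives a two-variable Gauss sum over $(\ZZ/N\ZZ)^2$. In both cases one applies Turaev's lemma (Lemma~1 of \cite{T98}): for a quadratic form $q$ on a finite abelian group with associated bilinear kernel $B$, one has $|\Ga(G,q)|=|B|^{1/2}$ provided $q|_B=0$, and the latter is an elementary divisibility check. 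This uniformly yields $|\Tr U_F|=|B|^{1/2}\geq1$, with no residual case and no phase bookkeeping. Your approach would give the same numerical answer once completed, but the paper's use of Turaev's lemma is what makes the non-square-free case no harder than the square-free one.
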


We will dedicate the rest of this section to the proofs of these two Lemmata. For basic facts about the field of $p$-adic numbers, $\QQ_p$ and
its algebraic extensions, we refer the reader to \cite{C86,N99}.

\bigskip
\subsection{Proof of Lemma \ref{lem7}}

Let $F\in\SL(2,\ZZ/N\ZZ)$ and $F_i\in\SL(2,\ZZ/p_i^{r_i}\ZZ)$ be the reduction of $F$ modulo $p_i^{r_i}$, $1\leq i\leq s$. Similarly, with $\bs x\in (\ZZ/N\ZZ)^2$ and
$\bs x_i\in\SL(2,\ZZ/p_i^{r_i}\ZZ)$. It is not hard to show that if each $\bs x_i$ is $F_i$-full, then $\bs x$ is $F$-full, a fact also shown in \cite{DBBA13}.
By multiplicativity of the Euler function, it suffices to consider $N=p^r$, a power
of an odd prime.


The case $r=1$ was treated in \cite{DBBA13}. The technique was to find the Jordan canonical form of $F$, considering
a quadratic extension of the field $\ZZ/p\ZZ$ if necessary (i. e. $\FF_{p^2}$); then we can control the powers
of $F$ and can count the points in $(\ZZ/p\ZZ)^2$ that are $F$-full.

When $r>1$, $\ZZ/N\ZZ$ is no longer a field, so the Jordan canonical form does not always exist, but as we
shall see below, in these exceptional cases, the order of $F$ is equal to $p^m$ or $2p^m$, 
for some $m\leq r$, so we only need to enumerate
the points in $(\ZZ/N\ZZ)^2$ that are fixed by $F^{p^{m-1}}$ or $F^{2p^{m-1}}$ accordingly, 
and as it turns out, this is an easy task.

It would be convenient to consider an arbitrary lift of the matrix 
\[F=\begin{pmatrix}
          a & b\\
          c & d
         \end{pmatrix}
\]
to a matrix in $\wt{F}\in\SL(2,\ZZ_p)$; since $F\in\SL(2,\ZZ/N\ZZ)$, then at least one of the entries
$a$, $b$, $c$, $d$, is not divisible by $p$, say $a$. Then, lift $a$, $b$, $c$, arbitrarily, to $\wt{a}$, $\wt{b}$,
$\wt{c}$, and put $\wt{d}=\wt{a}^{-1}(1+\wt{b}\wt{c})$. We also put $t=\Tr(F)$, $\wt{t}=\Tr(\wt{F})$,
$\De=t^2-4$, $\wt{\De}=\wt{t}^2-4$, the discriminants of the characteristic polynomials of $F$, $\wt{F}$,
respectively. Finally, we put
\[\la=\frac{t+\sqrt{\wt{\De}}}{2},\]
and the other root of the characteristic polynomial is $\la^{-1}$. We distinguish the following cases:

\bigskip
\noindent
$\boxed{p\nmid \De}$ In this case, $\la\not\equiv\la^{-1}\bmod p$; otherwise, we would have
\[\De\equiv(\la+\la^{-1})^2-4\equiv\la^2+\la^{-2}-2\equiv0\bmod p.\]
We reduce the entries of $F\bmod p$. Since $\la\not\equiv\la^{-1}\bmod p$, $F$ is
diagonalizable in $\Zp$ when $(\frac{\De}{p})=1$ or in a quadratic extension, namely $\FF_{p^2}$, when
$(\frac{\De}{p})=-1$. In both cases, we consider the field $K=\QQ_p(\sqrt{\wt{\De}})$, whose ring of integers
is $\mc{O}_K=\ZZ_p[\sqrt{\wt{\De}}]$ and the unique prime ideal is $p\mc{O}_K=p\ZZ_p[\sqrt{\wt{\De}}]$. This extension
is unramified, as $p\nmid\De$, hence the degree of the extension is equal to the degree of the extension of the residue fields. Therefore,
the residue field of $K$ is $\FF_p$ when $(\frac{\De}{p})=1$ and $\FF_{p^2}$ otherwise.

So, there is a nonsingular matrix $X$ with entries in  the residue field of $K$ such that
\begin{equation}\label{jordan}
FX\equiv X\begin{pmatrix}
            \la & 0\\
            0 & \la^{-1}
           \end{pmatrix}\bmod p\mc{O}_K,
\end{equation}
the congruence meaning that we consider each entry $\bmod p\mc{O}_K$.
We can lift
$X=\begin{pmatrix}
    x & z\\
    y & w
   \end{pmatrix}
$
to a $2\times2$ matrix with entries in $\mc{O}_K$, such that \eqref{jordan} becomes an
equality in $\mc{O}_K$ (and holds $\bmod N$, in particular). Indeed, if $b$ is not divisible by $p$, then we lift
$x$, $z$ arbitrarily, and then put $y=\wt{b}^{-1}(\la-\wt{a}x)$, $w=\wt{b}^{-1}(\la-\wt{a}z)$, and a similar
lift is possible if $c$ is not divisible by $p$. If both $b$ and $c$ are divisible by $p$, then $F\bmod p$ is
diagonal, hence 
$F\equiv\begin{pmatrix}
            \la & 0\\
            0 & \la^{-1}
           \end{pmatrix}$ or $\begin{pmatrix}
            \la^{-1} & 0\\
            0 & \la
           \end{pmatrix}\bmod p$. Without loss of generality, we may assume that the first congruence holds. Lift
$x$, $w$, arbitrarily, and then put $y=(\la-\la^{-1})^{-1}\wt{c}x$, and $z=(\la^{-1}-\la)^{-1}\wt{b}w$. We notice
that since $p\nmid\det(X)$, then $X^{-1}\in\GL(2,\mc{O}_K)$; we conclude that in all cases where $p\nmid\De$, $F$
is equivalent to a diagonal matrix, with entries perhaps in a larger ring. It is evident that in this case, the number
of $F$-full points is $N^2-1$, since $p\nmid\la$, and $\la\not\equiv1\bmod p$.

\bigskip
\noindent
$\boxed{p\mid\De}$ Reducing the matrix $F\bmod p$, we obtain a double eigenvalue, equal to $\pm1$. Then, the
Jordan canonical form of $F$ is
\[\begin{pmatrix}
   \pm1 & \be\\
   0 & \pm1
  \end{pmatrix}
\]
where $\be=0$ or $\be=1$.
It is clear that $F^p\equiv\pm I\bmod p$ and $F^{2p}\equiv I\bmod p$, or $F^{2p}\equiv I+pA\bmod{p^2}$,
for some matrix $A$. Raising both sides to the $p$-th power, we get $F^{2p^2}\equiv I+p^2A\bmod{p^3}$, 
and proceeding inductively we can show that
\[F^{2p^{r-1}}= I+\frac{N}{p}A,\]
hence $F^{2N}=I$. This shows that the order of $F$ is either $p^m$ or $2p^m$, for
some $m\leq r$.

Suppose first that the order of $F$ is $p^m$ ($m\geq1$); then, an element of $(\ZZ/N\ZZ)^2$ is $F$-full,
if and only if
it is not fixed by $F^{p^{m-1}}$ (this follows from the fact that the cardinality of the orbit of any
element under a group, divides the order of the group), and the latter is equivalent to the condition that
this element is $F^{p^{m-1}}$-full. Therefore, we can reduce to the case where $m=1$, that is, the order
of $F$ is $p$. Since the number of $F$-full points is the same in the conjugacy class of $F$, we
may further assume that $F$ reduced $\bmod p$ is equal to $\begin{pmatrix}
   \pm1 & \be\\
   0 & \pm1
  \end{pmatrix}$. Now, let $k$ be the smallest positive integer for which we have
\[F\equiv\begin{pmatrix}
               1 & \be\\
               0 & 1
              \end{pmatrix}+p^{k-1}D\bmod{p^k}
\]
for some matrix $D\not\equiv\bs{O}\bmod p$, where $\bs{O}$ is the zero matrix. We have $2\leq k\leq r+1$. If $\be=0$, then $k=r$; if $k<r$, then
\[F^p\equiv I+p^k D\bmod{p^{k+1}},
\]
hence $F^p\neq I$, a contradiction. Similarly, if $k=r+1$, then $F=I$, which is also a contradiction.
So, $F=I+\frac{N}{p}D$.
A vector $\bs{x}=\begin{pmatrix}
                                                  \bs{x}_1\\
                                                  \bs{x}_2
                                                 \end{pmatrix}\in(\ZZ/N\ZZ)^2
$ is fixed by $F$ if and only if
\[D\bs{x}\equiv\bs{0}\bmod p.\]
The set of such vectors reduced $\bmod p$ form a proper vector subspace of $(\Zp)^2$, so they are at most $p$. Then,
the number of all the possible lifts of these vectors is at most $\bmod N$ is $p^{2(r-1)}\cdot p=p^{2r-1}$. Therefore,
the number of $F$-full vectors in this case is at least $p^{2r}-p^{2r-1}=N\vphi(N)$.

If $\be=1$, then
\[F^p\equiv\begin{pmatrix}
               1 & p\\
               0 & 1
              \end{pmatrix}+p^{k-1}\sum_{\ka+\mu=p-1}
              \begin{pmatrix}
               1 & \ka\\
               0 & 1
              \end{pmatrix}
D\begin{pmatrix}
  1 & \mu\\
  0 & 1
 \end{pmatrix}
\bmod{p^k}.
              \]
We put $D=\begin{pmatrix}
           d_1 & d_2\\
           d_3 & d_4
          \end{pmatrix}
$ and compute the above sum $\bmod p$:
\begin{eqnarray*}
 \sum_{\ka+\mu=p-1}
              \begin{pmatrix}
               1 & \ka\\
               0 & 1
              \end{pmatrix}
D\begin{pmatrix}
  1 & \mu\\
  0 & 1
 \end{pmatrix} &=& \sum_{\ka+\mu=p-1}\begin{pmatrix}
		    d_1+\ka d_3 & \mu d_1+\ka\mu d_3+d_2+\ka d_4\\
		    d_3 & \mu d_3+d_4
		    \end{pmatrix}\\
		    &\equiv& \bs{O} \bmod p
\end{eqnarray*}
since 
\[\sum_{\ka+\mu=p-1}1=p, \sum_{\ka+\mu=p-1}\ka=\sum_{\ka+\mu=p-1}\mu=p\cdot\frac{p-1}{2}, 
\sum_{\ka+\mu=p-1}\ka\mu=p\cdot\left(\frac{(p-1)^2}{2}-\frac{(p-1)(2p-1)}{6}\right).\]
But then, $F^p\not\equiv I\bmod{p^k}$, a contradiction if $k\leq r$; if $k=r+1$, then
$F^p=\begin{pmatrix}
           1 & p\\
           0 & 1
          \end{pmatrix}\neq I
$. We conclude that if the order of $F$ is $p$ and $r\geq2$, then $\be=0$ (the case $\be\neq0$ can only occur
when $r=1$, but this was treated in \cite{DBBA13}).

Next, suppose that the order of $F$ is $2p^m$. Then, a vector is $F$-full
if and only if it is not fixed by $F^{p^m}$ or $F^{2p^{m-1}}$. But $F^{p^m}=-I$, 
which only fixes the zero vector, so
we only need to exclude the vectors fixed by $F^{2p^{m-1}}$; however, this matrix has order $p$, so the above 
analysis applied to $F^{2p^{m-1}}$ yields the fact that the number of $F$-full points is at 
least $N\vphi(N)$.

\bigskip

\subsection{Proof of Lemma \ref{lem8}}\label{8}

The trace $\Tr(U_{F})$ is a quadratic Gauss sum \cite{A05}; we will use the following lemma by Turaev \cite{T98}
(see Lemma 1) which gives the 
absolute value of such a sum over an arbitrary finite abelian group $G$. Moreover, by \eqref{trkron} we may assume
that $N$ is a power of an odd prime, $p$.

Let's fix some notation first; 
$q:G\longrightarrow\QQ/\ZZ$ denotes an arbitrary quadratic form on the finite abelian group $G$. Such a function
is a quadratic form if the expression $b^q(x,y)=q(x+y)-q(x)-q(y)$ is bilinear (\emph{we do not require homogeneity}).
The Gauss sum $\Ga(G,q)$ is defined to be
\[\frac{1}{\abs{G}^{1/2}}\sum_{x\in G}e^{2\pi iq(x)}.\]
Lastly, for easy reference to the explicit formulae for the unitary matrices $U_F$ given in \cite{A05}, we decided to use the
bra-ket notation; the set of (column) vectors
\[|0\rangle,|1\rangle.\dotsc,|N-1\rangle,\]
is the standard basis of $\CC^N$, and $\langle\vphi|$ is the conjugate transpose of $|\vphi\rangle$.

\begin{lemma}[Lemma 1 \cite{T98}]
 Let $B$ be the kernel of the homomorphism $G\longrightarrow\Hom(G,\QQ/\ZZ)$ adjoint to the pairing $b^q$. If $q(B)\neq0$,
 then $\Ga(G,q)=0$. If $q(B)=0$, then $\abs{\Ga(G,q)}=\abs{B}^{1/2}$.
\end{lemma}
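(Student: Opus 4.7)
The plan is to unpack the definition of $B$ as the radical of the bilinear form $b^q$, exploit the translation symmetry of the exponential sum by elements of $B$, and then reduce to the nondegenerate quotient $G/B$ where a standard orthogonality argument controls the Gauss sum.

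First I would establish the basic identity $q(x+y)=q(x)+q(y)$ for all $x\in G$ and $y\in B$, which follows directly from $b^q(x,y)=0$ and the defining relation $b^q(x,y)=q(x+y)-q(x)-q(y)$. Setting $x=y=0$ in this relation and using bilinearity of $b^q$ to get $b^q(0,0)=0$ forces $q(0)=0$, which I will need later. Writing $S=\sum_{x\in G}e^{2\pi i q(x)}$ so that $\Ga(G,q)=S/|G|^{1/2}$, the change of variables $x\mapsto x+y$ for any fixed $y\in B$ yields
\[
S \;=\; \sum_{x\in G} e^{2\pi i q(x+y)} \;=\; e^{2\pi i q(y)} \sum_{x\in G} e^{2\pi i q(x)} \;=\; e^{2\pi i q(y)}\, S.
\]
If $q(B)\neq 0$ we may choose $y\in B$ with $e^{2\pi i q(y)}\neq 1$, which forces $S=0$ and proves the first claim.

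For the second claim, I would use the same identity to conclude that $q$ is constant on each coset of $B$, so it descends to a well-defined $\bar q:G/B\rar\QQ/\ZZ$, and the associated bilinear form $b^{\bar q}$ is nondegenerate on $G/B$ by construction of $B$. Grouping the sum over cosets then gives $S=|B|\cdot T$, where $T=\sum_{\bar x\in G/B} e^{2\pi i\bar q(\bar x)}$. Evaluating $|T|^2$ by the standard trick
\[
|T|^2 \;=\; \sum_{\bar x,\bar y\in G/B} e^{2\pi i(\bar q(\bar x)-\bar q(\bar y))} \;=\; \sum_{\bar z\in G/B} e^{2\pi i\bar q(\bar z)} \sum_{\bar y\in G/B} e^{2\pi i b^{\bar q}(\bar y,\bar z)},
\]
after the substitution $\bar x=\bar y+\bar z$, the inner sum is a character sum in $\bar y$; by nondegeneracy of $b^{\bar q}$, it equals $|G/B|$ when $\bar z=0$ and $0$ otherwise. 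Together with $\bar q(0)=0$ this gives $|T|^2=|G/B|$, hence $|S|^2=|B|^2\cdot|G|/|B|=|B|\cdot|G|$, and dividing by $|G|$ yields $|\Ga(G,q)|^2=|B|$.

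The main subtlety is that the paper explicitly allows $q$ to be inhomogeneous, so I must avoid invoking $q(\alpha x)=\alpha^2 q(x)$; instead everything is driven purely by the identity $q(x+y)-q(x)-q(y)=b^q(x,y)$ and the fact (which still holds) that $q(0)=0$. The descent of $q$ to $G/B$ relies exactly on the hypothesis $q(B)=0$, which is why the two cases split as they do; after that the argument becomes a routine orthogonality computation, which I would not grind out in detail.
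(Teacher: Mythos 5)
Your proof is correct: the translation identity $q(x+y)=q(x)+q(y)$ for $y\in B$ handles the vanishing case, and the descent to the nondegenerate quotient $G/B$ followed by the orthogonality computation for $|T|^2$ gives $|\Ga(G,q)|^2=|B|$ exactly as claimed, with the inhomogeneity of $q$ never an issue since you only ever use the defining relation for $b^q$ and $q(0)=0$. The paper itself states this lemma as a citation of Turaev's Lemma 1 and gives no proof, so there is nothing internal to compare against; your argument is the standard one (and essentially Turaev's), so nothing further is needed.
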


If $p\nmid b$, then the matrix $F$ is called \emph{prime}, and from the explicit formulae of \cite{A05} 
(see Lemma 2 and Lemma 4), we get
\[U_{F}=\frac{e^{i\theta}}{\sqrt{N}}\sum_{r,s=0}^{N-1}\tau^{b^{-1}(as^2-2rs+dr^2)}|r\rangle\langle s|,\]
where $\theta$ is an arbitrary phase, and $b^{-1}$ the inverse of $b\bmod N$, hence
\[\Tr(U_{F})=\frac{e^{i\theta}}{\sqrt{N}}\sum_{r=0}^{N-1}\tau^{b^{-1}(t-2)r^2}.\]
where $\tau=-e^{\frac{\pi i}{N}}$ and $t=a+d=\Tr(F)$. Putting $G=\ZZ/N\ZZ$ and
\[q(r)=\frac{b^{-1}(t-2)(N+1)}{2N}r^2,\]
we get $\Tr(U_{F})=e^{i\theta}\Ga(G,q)$.
$q$ is a well-defined quadratic form on $G$; indeed, as $r^2\equiv r'^2\mod{2N}$, when $r\equiv r'\mod N$, when
$N$ is odd. The associated bilinear pairing is
\[b^q(r,s)=\frac{b^{-1}(t-2)(N+1)}{N}rs,\]
and $r\in B$ if and only if $b^q(r,1)=0$, or equivalently, if
\[b^{-1}(t-2)r\equiv0\bmod N.\]
So, if $r\in B$ is arbitrary, then $N$ divides $b^{-1}(t-2)r^2$, hence $2N$ divides $b^{-1}(t-2)(N+1)r^2$, which
shows that $q(r)=0$. This proves that $q(B)=0$, hence $\abs{\Ga(G,q)}=\abs{B}^{1/2}\geq1$ and 
$\abs{\Tr(U_{F})}\geq1$.

Now, assume that $p\mid b$; then $p\nmid d$ (otherwise $\det(F)$ would be divisible by $p$)
and we can write $F$ as a product of two prime matrices, as follows:
\[F=F_1F_2=\begin{pmatrix}
                           0 & -1\\
                           1 & 0
                          \end{pmatrix}\begin{pmatrix}
                          c & d\\
                          -a & -b
                          \end{pmatrix}
\]
and by Lemma 4 \cite{A05}, we have $U_{F}=U_{F_1}U_{F_2}$, where
\[U_{F_1}=\frac{e^{i\theta_1}}{\sqrt{N}}\sum_{u,v=0}^{N-1}\tau^{2uv}|u\rangle\langle v|\]
and
\[U_{F_2}=\frac{e^{i\theta_2}}{\sqrt{N}}\sum_{v,w=0}^{N-1}\tau^{d^{-1}(cw^2-2vw-bv^2)}|v\rangle\langle w|,\]
where $\theta_1$, $\theta_2$ arbitrary phases, hence
\[U_{F}=\frac{e^{i\theta}}{N}\sum_{u,w=0}^{N-1}\sum_{v=0}^{N-1}\tau^{2uv+d^{-1}(cw^2-2vw-bv^2)}|u\rangle\langle w|\]
and
\[\Tr(U_{F})=\frac{e^{i\theta}}{N}\sum_{u,v=0}^{N-1}\tau^{cd^{-1}u^2+2(1-d^{-1})uv-bd^{-1}v^2},\]
where $\theta=\theta_1+\theta_2$.
So, if we put $G=(\ZZ/N\ZZ)^2$ and $q:G\longrightarrow\QQ/\ZZ$ the quadratic form
\[q(u,v)=\frac{N+1}{2N}(cd^{-1}u^2+2(1-d^{-1})uv-bd^{-1}v^2)\]
then $\Tr(U_{F})=e^{i\theta}\Ga(G,q)$. The associated bilinear form is
\[b^q((u,v),(r,s))=\frac{N+1}{N}(u\ v)A\begin{pmatrix}
                                        r\\
                                        s
                                       \end{pmatrix}
\]
where
\[A=\begin{pmatrix}
     cd^{-1} & 1-d^{-1}\\
     1-d^{-1} & -bd^{-1}
    \end{pmatrix}.
\]
Now, let $(u\ v)\in B$ be arbitrary. Then,
\[(u\ v)A\equiv(0\ 0)\bmod N,
\]
otherwise, we would have either $b^q((u,v),(1,0))\neq0$ or $b^q((u,v),(0,1))\neq0$. In particular, $N$ divides
$b^q((u,v),(u,v))$, and since $N$ is odd, $2N$ divides
\[(N+1)(u\ v)A\begin{pmatrix}
                                        u\\
                                        v
                                       \end{pmatrix}
\]
which yields $q(u,v)=0$. Thus, $q(B)=0$, and $\abs{\Tr(U_{F})}=\abs{\Ga(G,q)}=\abs{B}^{1/2}\geq1$.

\bigskip
\bigskip

\section{Uncertainty principles}\label{UNC}
\bigskip

\noindent
The full spark property of (almost all) Gabor frames of windows defined over finite cyclic groups implies the following inequality for the short-time Fourier
transform of $f$:
\[\norm{V_{\vphi}^*f}_0\geq N^2-N+1,\]
where $N$ is the size of said group, for almost all $\vphi\in\CC^N$ and all nonzero $f\in\CC^N$ \cite{KPR08,M15,P13}. A possible connection between the set of pairs
of the form $(\norm{f}_0,\bigl\|\widehat{f}\bigr\|_0)$, denoted by $F$, and the set $F_{\vphi}$ of all pairs of the form $(\norm{f}_0,\norm{V_{\vphi}^*f}_0-N^2+N)$
(for both sets we take $f$ nonzero) was investigated in \cite{KPR08}. In particular, the following problem was proposed.

\begin{prob}[\cite{KPR08}]\label{kpr}
 Is it true that $F=F_{\vphi}$ for almost all $\vphi$?
\end{prob}
When $N=p$ a prime number, this problem was solved to the affirmative \cite{KPR08}. One has an exact characterization of the set $F$ \cite{T05} and the fact
that all minors of the Gabor synthesis matrix are nonzero for all $\vphi$ except for a set of measure zero (Theorem 4 \cite{LPW05}), 
leads to a characterization of the set $F_{\vphi}$,
and equality between $F$ and $F_{\vphi}$ is easily confirmed. When $N$ is composite, however, there is no exact characterization for the set $F$, so it is more
difficult to obtain equality; this was confirmed numerically for dimensions up to $6$ \cite{KPR08}. The question is whether we can prove equality between those
two sets without using the characterization of $F$. We will show that one inclusion is possible, but the other one, namely $F_{\vphi}\ssq F$ seems much harder to
prove, if true.

As a final remark, we note that the spark deficiency of all Gabor frames of windows defined over abelian, non-cyclic groups, implies that equality between
$F$ and $F_{\vphi}$ can never be achieved, simply because there are $f\in\CC^G$ for which $\norm{V_{\vphi}^*f}_0\leq N^2-N$, as shown in the proof of Theorem 
\ref{prime}.

A useful identity is the following:
\begin{equation}\label{supportSTFT}
\norm{V_{\vphi}^*f}_0=\sum_{j=0}^{N-1}\norm{\widehat{T^j\vphi\cdot f}}_0.
\end{equation}

\begin{thm}
For almost all $\vphi$ the inclusion
$F\ssq F_{\vphi}$ holds. In addition, this $\vphi$ can be taken to generate a full spark Gabor frame.
\end{thm}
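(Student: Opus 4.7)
The plan is to fix an arbitrary pair $(k,\ell)\in F$, pick any representative $f_{k,\ell}\in\CC^N\sm\set{0}$ with $\norm{f_{k,\ell}}_0=k$ and $\bigl\|\widehat{f_{k,\ell}}\bigr\|_0=\ell$, and exhibit—for a generic $\vphi$—a companion vector $g=g_{k,\ell,\vphi}$ with $\norm{g}_0=k$ and $\norm{V_{\vphi}g}_0=\ell+N^2-N$, so that $(k,\ell)\in F_{\vphi}$. The construction is the pointwise quotient
\[
g(n)=\frac{f_{k,\ell}(n)}{\vphi(n)},\qquad n\in\ZZ/N\ZZ,
\]
which is well defined whenever $\vphi$ has full support, and in that case satisfies $\supp(g)=\supp(f_{k,\ell})$, giving $\norm{g}_0=k$.

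To compute $\norm{V_{\vphi}g}_0$ I would invoke the identity \eqref{supportSTFT} and split it as $\sum_{j=0}^{N-1}\bigl\|\widehat{T^j\vphi\cdot g}\bigr\|_0$. The term $j=0$ is immediate: $T^0\vphi\cdot g=\vphi\cdot(f_{k,\ell}/\vphi)=f_{k,\ell}$, so it contributes exactly $\ell$. The heart of the argument is to prove that each of the remaining $N-1$ summands equals $N$, i.e.\ that $\widehat{T^j\vphi\cdot g}$ has full support whenever $j\neq 0$, for almost every $\vphi$.

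For this, I would clear denominators: setting $\Pi(\vphi)=\prod_{n'\in\supp(f_{k,\ell})}\vphi(n')$,
\[
\Pi(\vphi)\cdot\widehat{T^j\vphi\cdot g}(m)=\sum_{n\in\supp(f_{k,\ell})}\om^{mn}f_{k,\ell}(n)\,\vphi(n-j)\!\!\prod_{n'\in\supp(f_{k,\ell})\sm\set{n}}\!\!\vphi(n'),
\]
which is a polynomial $P_{j,m}(\vphi)$ in the indeterminates $\vphi(0),\dotsc,\vphi(N-1)$. The decisive combinatorial check is that the monomials $M_n(\vphi)=\vphi(n-j)\prod_{n'\in\supp(f_{k,\ell})\sm\set{n}}\vphi(n')$ indexed by $n\in\supp(f_{k,\ell})$ are pairwise distinct. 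Indeed, if $n-j\notin\supp(f_{k,\ell})$ then every variable in $M_n$ appears to degree $1$, and the unique factor coming from outside $\supp(f_{k,\ell})$ is $\vphi(n-j)$, which pins down $n$; whereas if $n-j\in\supp(f_{k,\ell})$ (necessarily distinct from $n$, since $j\neq 0$) then exactly one variable, namely $\vphi(n-j)$, appears to degree $2$, again pinning down $n$. Hence distinct $n$'s yield distinct monomials, no cancellation can occur, and since every coefficient $\om^{mn}f_{k,\ell}(n)$ is nonzero, $P_{j,m}$ is a nonzero polynomial; its zero locus in $\CC^N$ therefore has Lebesgue measure zero.

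Finally, taking the union of these measure-zero loci over the finitely many $(k,\ell)\in F$, indices $j\in\set{1,\dotsc,N-1}$ and $m\in\ZZ/N\ZZ$, together with the coordinate hyperplanes $\set{\vphi(n)=0}$ and the measure-zero set of windows failing to generate a full spark Gabor frame (by \cite{M15}), still yields a null set. Outside this null set, $\vphi$ has full support, generates a full spark Gabor frame, and the construction above establishes $F\ssq F_{\vphi}$. I expect the monomial-distinctness verification to be the main obstacle; the rest is bookkeeping of measure-zero exceptional loci.
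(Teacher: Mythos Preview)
Your proof is correct and follows essentially the same route as the paper: choose a representative $f_{k,\ell}$, set $g=f_{k,\ell}/\vphi$, use identity \eqref{supportSTFT} so that the $j=0$ term contributes $\ell$, and show that for $j\neq 0$ each Fourier coefficient is a nonzero polynomial in the coordinates of $\vphi$. The only cosmetic difference is that the paper clears denominators with the full product $\Phi=\prod_{n}\vphi(n)$ rather than your $\Pi(\vphi)=\prod_{n\in\supp f_{k,\ell}}\vphi(n)$; your explicit monomial-distinctness check (via the missing or squared variable) in fact fills in a detail the paper leaves implicit.
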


\begin{proof}
 First, we may restrict our attention to $\vphi$ generating a full spark Gabor frame, as we already know that almost all $\vphi$ satisfy
 this condition. This implies that all coordinates of $\vphi$ are nonzero, otherwise the frequency translates of $\vphi$ would form
 a singular matrix.
 Next, for any pair $(k,l)\in F$ we consider $f_{k,l}\in\CC^N$ with $\norm{f_{k,l}}_0=k$ and $\bigl\|\widehat{f_{k,l}}\bigr\|_0=l$. We may rewrite \eqref{supportSTFT}
 as
 \begin{equation}\label{supportSTFT2}
\norm{V_{\vphi}^*\frac{f_{k,l}}{\vphi}}_0=\sum_{j=0}^{N-1}\norm{\widehat{\frac{T^j\vphi}{\vphi}\cdot f_{k,l}}}_0
=\norm{\widehat{f_{k,l}}}_0+\sum_{j=1}^{N-1}\norm{\widehat{\frac{T^j\vphi}{\vphi}\cdot f_{k,l}}}_0=
l+\sum_{j=1}^{N-1}\norm{\widehat{\frac{T^j\vphi}{\vphi}\cdot f_{k,l}}}_0.
\end{equation}
It suffices to show that almost all $\vphi$ satisfy
\[\norm{\widehat{\frac{T^j\vphi}{\vphi}\cdot f_{k,l}}}_0=N,\]
for all $(k,l)\in F$ and $1\leq j\leq N-1$, or equivalently, it suffices to show that
\[\Phi\sum_{g=0}^{N-1}\xi(g)f_{k,l}(g)\frac{\vphi(g-j)}{\vphi(g)}\neq0,\]
for almost all $\vphi\in\CC^N$, all characters $\xi$, $(k,l)\in F$, $1\leq j\leq N-1$, where $\Phi$ is the product of the coordinates of $\vphi$. 
But the left-hand side is a polynomial in the coordinates of $\vphi$ with coefficients of the form $\xi(g)f_{k,l}(g)$, which shows that every such polynomial is nonzero, as
the functions $f_{k,l}$ are not identically zero. Therefore, $\vphi$ has to avoid the zero set of finitely many nonzero polynomials, whose union is of measure zero. Thus,
almost all $\vphi$ satisfy
\[\norm{V_{\vphi}^*\frac{f_{k,l}}{\vphi}}_0=N^2-N+l,\]
for every $(k,l)\in F$, as desired.
\end{proof}

 \bigskip
 \bigskip

\end{document}